\documentclass[10pt]{amsart} \usepackage{graphicx}

\usepackage[OT1]{fontenc} \usepackage{amsmath, graphicx, mathrsfs,
amsfonts, amssymb, enumerate} \usepackage[numbers]{natbib}
\usepackage[colorlinks,citecolor=blue,urlcolor=blue]{hyperref}
%
%
%
%

\numberwithin{equation}{section}

\numberwithin{equation}{section} \theoremstyle{definition}

\theoremstyle{definition} \newtheorem{remark}[equation]{Remark}
\theoremstyle{lemma} \newtheorem{lemma}[equation]{Lemma}
\theoremstyle{theorem} \newtheorem{theorem}[equation]{Theorem}
\theoremstyle{proposition}

\theoremstyle{corollary} \newtheorem{corollary}[equation]{Corollary}
 \theoremstyle{corollary}
\theoremstyle{definition} \newtheorem{example}[equation]{Example}
\theoremstyle{example}


\newcommand{\mcG}{\mathcal{G}}
\newcommand{\mcH}{\mathcal{H}}
\newcommand{\mcC}{\mathcal{C}}
\newcommand{\mcD}{\mathcal{D}}
\newcommand{\mcI}{\mathcal{I}}
\newcommand{\mcJ}{\mathcal{J}}

\newcommand{\CC}{\mathbf{C}}
\newcommand{\ZZ}{\mathbf{Z}}
\newcommand{\NN}{\mathbf{N}}

\newcommand{\RR}{\mathbf{R}}
\newcommand{\EE}{\mathbf{E}}
\newcommand{\E}{\mathbf{E}}
\newcommand{\ip}[1]{\langle #1 \rangle}
\newcommand{\PP}{\mathbf{P}}

\DeclareMathOperator*{\sat}{\text{sat}} \DeclareMathOperator*{\ad}{ad}
 \DeclareMathOperator*{\supp}{supp}

\title[Positive Densities]{A practical criterion for positivity of transition densities} \author{David
P. Herzog} \author{Jonathan C. Mattingly}

\begin{document}

\maketitle

\begin{abstract}
We establish a simple criterion for locating points where the transition density of a degenerate diffusion is strictly positive.  Throughout, we assume that the diffusion satisfies a stochastic differential equation (SDE) on $\RR^d$ with additive noise and polynomial drift.  In this setting, we will see that it is often that case that local information of the flow, e.g. the Lie algebra generated by the vector fields defining the SDE  at a point $x\in \RR^d$, determines where the transition density is strictly positive.  This is surprising in that positivity is a more global property of the diffusion.  This work primarily builds on and combines the ideas of  Ben Arous and L\'{e}andre \cite{BAL} and Jurdjevic and Kupka \cite{JK85}.        
\end{abstract}


\section{Introduction}

The goal of this paper is to develop an easily applicable framework
for locating points where the probability density of a degenerate
diffusion is strictly positive.  We will focus on the setting where
the diffusion satisfies a stochastic differential equation (SDE) on
$\RR^d$ where each component of the drift is a polynomial in the
standard Euclidean coordinates and the noise is additive.  Our methods
reduce finding points of positivity to computing a certain collection
of constant vector fields generated by taking
iterated commutators of the vector fields defining the SDE.  This is convenient since a similar computation is typically used to
show that the diffusion has a smooth probability density function
$p_t(x,y)$ with respect to Lebesgue measure $dy$.   While the existence of a smooth density is decided locally, we show that in some settings the bracket computation also determines the more global property of where the density is strictly positive.  Additionally, uncovering sufficiently large regions of positivity is useful for proving unique ergodicity.

%

While methods already exist for proving positivity of transition
densities, most require knowledge of attainable sets via
controls. Here we have structured our assumptions to require as
little global control information as possible. In particular, our results
prove smoothness of the densities, the needed control statements, and
positivity, all with one set of primarily local assumptions.

Although our general framework is limited to SDEs with polynomial drift and additive noise, working within such boundaries is reasonable in many applications.  In particular, to illustrate the utility of our results, we will apply them to a collection of examples, each with quite different structure.  Moreover, for the equations considered, either new results will be obtained or existing results will be improved upon.

The ideas used in this note build on a number existing works. Beyond
the now classical theory of H{\"o}rmander \cite{Hor67} on hypoelliptic
operators in the ``sum of squares'' form, we use the associated
probabilistic techniques of Malliavin calculus \cite{Nua98}. We also
use a number of ideas from geometric control theory \cite{Jur97}. Moreover, we modify the idea that odd powered polynomial vector fields are ``good'' (due to their time reversal properties) and even
powered polynomial vector fields are ``bad" \cite{JK85}. Similar ideas were critical in the work of Romito \cite{Rom04}. We also integrate into
our results the powerful ideas of Ben Arous and L{\'e}andre \cite{BAL}
for proving positivity of densities of random variables over a Wiener
space. Our hope is that by bringing these ideas together and
adapting them to our specific context, we will provide a useful tool for many applied equations.

The layout of this paper is as follows.  In Section
\ref{sec:mainresults}, we introduce notation and terminology and state
the main general results of the paper.  In Section \ref{sec:examples},
we apply our results to specific examples.  Section \ref{sec:pres}
contains heuristic discussions of why the main results hold and are
natural. We also include an ``non-example'', that is an example where the main results fail to apply yet the corresponding density has regions of positivity (in space and time), and illustrate how to adapt the general theory in such cases.  Additionally, Section
\ref{sec:pres} contains the proof of  the main results as stated in 
Section \ref{sec:mainresults}.

\section*{Acknowledgements}

The authors would like to thank Avanti Athreya, Richard Durrett,
Tiffany Kolba, James Nolen, and Jan Wehr for helpful conversations on
the topic of this paper.  DPH would also like to thank Martin Hairer
for suggesting the paper \cite{JK85}, from which his understanding of
these ideas began and lead to the current collaboration.  We would
also like to acknowledge partial support of the NSF through grant
DMS-08-54879 and the Duke University Dean's office.

\section{Notation, Terminology and Main Results}
\label{sec:mainresults}

Throughout, we study stochastic differential equations on $\RR^d$ of
the following form
\begin{eqnarray}\label{eqn:sde1}
dx_t &=&X_0(x_t) \, dt +  \sum_{j=1}^{r}  X_{j}\, dW_t^{j}
\end{eqnarray}
where $X_0$ is a \emph{polynomial vector field}; that is,
$X_0=\sum_{j=1}^{d}X^{j}_0(x)\partial_{x_j}$ is such that each map
$x\mapsto X^{j}_0(x)$ is a polynomial in the standard Euclidean
coordinates, $X_{1}, \ldots, X_{r}$ are constant vector fields; that
is, they do not depend on the base point, and $W^1_t, W^2_t, \ldots,
W^r_{t}$ are standard independent real Wiener processes defined on a
probability space $(\Omega, \mathscr{F}, \PP)$.

To deal with the issue of finite-time explosion in \eqref{eqn:sde1},
we will need to stop the process $x_t$ prior to the time of
explosion. Thus for $n\in \NN$, let $B_n(0)$ denote the open ball of radius $n$ centered at the origin in $\RR^d$, and define 
the stopping times $\tau_n = \inf\{t>0 \,: \, x_t \notin B_n(0)\}$ and $\tau_\infty=\lim_{n\uparrow \infty} \tau_n$.  Our results will be stated for the
stopped processes $x_{t\wedge \tau_n}$, $n\in \NN$. Of course,
$x_{t\wedge \tau_n}$  coincides with $x_t$ for all times $t\leq \tau_n$.

For vector fields $V=\sum_{j=1}^{d} V^j(x) \frac{\partial}{\partial
  x_j}$ and $W=\sum_{j=1}^{d}W^{j}(x) \frac{\partial}{\partial x_j}$,
let $\text{ad}^{0}V(W)=W$,
\begin{equation*}
  \text{ad}^{1}V(W)=[V,W]:= \sum_{j=1}^{d} \bigg(\sum_{k=1}^{d}
  V^{k}(x) \frac{\partial W^{j}(x)}{\partial x_k} - W^{k}(x)
  \frac{\partial V^{j}(x)}{\partial x_k}\bigg)\frac{\partial}{\partial
    x_j}. 
\end{equation*}
Inductively, for $m\geq 2$ we let $\ad^{m}V(W)=\text{ad}V
\text{ad}^{m-1}V(W) $.  For a set of vector fields $\mcG$ on $\RR^d$,
$\text{span}(\mcG)$ denotes the $\RR$-linear span of $\mcG$ and
\begin{align*}
\text{cone}_{\geq 0 }(\mcG) = \{ \textstyle{\sum}_{i=1}^j \lambda_i
V_i \, : \, \lambda_i \geq 0, \, V_i \in \mcG\}.   
\end{align*}
We call $x\in \RR^d$ an \emph{equilibrium point} of a set of vector
fields $\mcG$ if $V(x)=0$ for some $V\in \mcG$.  If $V$ is a constant
vector field with constant value $v\in \RR^{d}$ and $W$ is a
polynomial vector field, then we may define a map from $\RR$ into
$\RR^{d}$ given by $\lambda \mapsto (W^j(\lambda v))$.  Note that
since $W$ is a polynomial vector field, $(W^j(\lambda v))$ is a vector
of polynomials in $\lambda$.  Let $n(V,W)$ be the maximal degree among
these polynomials (For purposes below, we assume that the zero
polynomial has neither even nor odd degree).  We call $n(V,W)$ the
\emph{relative degree} of $V$ and $W$.

We now introduce the set of constant vector fields $\mcC$ which will
play a fundamental role throughout the paper.  It will be defined as
the subset of constant vector fields in a larger set of vector fields
which we now introduce.  To initialize the inductive procedure let
$\mcG_0=\text{span}\{X_1, \ldots , X_r\} $ and
\begin{align*}
\mcG_{1}^{\text{o}} &= \mcG_0 \cup  \{\text{ad}^{n(V,X_0)}V(X_0) \,
: \, V \in \mcG_0 ,  n(V,X_0) \text{ odd} \},\\ 
\mcG_{1}^{\text{e}}&=\{\text{ad}^{n(V,X_0)}V(X_0) \, : \,V \in
\mcG_0, \, n(V,X_0) \text{ even} \},\\ 
\mcG_1 &= \text{span} (\mcG_{1}^{\text{o}}) + \text{cone}_{\geq 0}(
\mcG_{1}^{ \text{e}}).   
\end{align*} 
For $j\geq 1$, we define $\mcG_{j+1}^{\text{o}},\,
\mcG_{j+1}^{\text{e}}, \, \mcG_{j+1}$ inductively as  
\begin{align*} 
  \mcG_{j+1}^{\text{o}} &=\mcG_{j}^{\text{o}}\cup
  \{\text{ad}^{n(V,W)}V(W) \, : \, V \in \mcG_{j}^{\text{o}}  \text{
    constant}, \, W\in \mcG_{j}, \, n(V,W) \text{ odd} \},\\ 
  \mcG_{j+1}^{\text{e}}&= \mcG_{j}^{\text{e}}\cup
  \{\text{ad}^{n(V,W)}V(W) \, : \, V \in \mcG_{j}^{\text{o}} \text{
    constant}, \, W\in \mcG_{j}, \, n(V, W) \text{ even} \},\\ 
  \mcG_{j+1} &= \text{span}(\mcG_{j}^{\text{o}} ) +
  \text{cone}_{\geq 0}( \mcG_{j}^{\text{e}}).   
\end{align*}
Let $\mcC^\text{o}$ denote the set of constant vector fields in
$\cup_j \mcG_{j}^{\text{o}}$ and $\mcC^{\text{e}}$ denote the set
of constant vector fields in $\cup_j \mcG_{j}^{\text{e}}$.
Finally, define  
\begin{align}
\mcC = \text{span}(\mcC^\text{o}) + \text{cone}_{\geq 0}(\mcC^\text{e}).  
\end{align}

\begin{remark}
  Throughout, we will often identify a constant vector field on
  $\RR^d$ with the vector in $\RR^d$ which defines it.  For example,
  depending on the context, $\mcC^{\text{o}}$ will be used to denote
  either the set of vector fields $\mcC^{\text{o}}$ defined above or
  the set of vectors $v\in \RR^d$ such that $v=V(x)$ for some $V\in
  \mcC^{\text{o}}$.
\end{remark}

\begin{remark}
  The primary assumption we will make is that $\mcC$ is
  $d$-dimensional.  This is equivalent to assuming that $\mcC$ spans the entire tangent space at all points $x\in \RR^d$ as $\mcC$ contains only constant vector fields.  Since $\mcC$
  is contained in the Lie algebra generated by
\begin{align*}
X_1, \ldots, X_r, [X_1, X_0], \ldots, [X_r, X_0], 
\end{align*}  
it follows by H\"{o}rmander's hypoellipticity theorem \cite{Hor67}
that for every $n\geq 1$, $x\in B_n(0)$  and every Borel set $A\subset
B_n(0)$
\begin{align*}
\PP_x \{ x_{t\wedge \tau_n} \in A\} = \int_A p_t^n(x,y)\, dy 
\end{align*}   
for some nonnegative function $p_t^n(x,y)$ which is defined and smooth
on $(0, \infty)\times B_n(0)\times B_n(0)$.  Here we recall that $B_n(0)$ is the open ball of radius $n$ centered at the origin in $\RR^d$.  
Certainly, the transition kernel of $x_{t\wedge \tau_n}$ contains a singular component concentrated
on the boundary of $B_n(0)$. However, this is invisible to sets
contained in $B_n(0)$ since $B_n(0)$ is open.
\end{remark}

We now state the main general result of the paper.

\begin{theorem}\label{dsaimppos}\label{dsaimpposc}
  Suppose that $\mcC$ is $d$-dimensional and let $\{ y_1, \ldots,
  y_d\}\subset \mcC$ be a basis of $\mcC$ such that $\{y_1, \ldots,
  y_k \}\subset \mcC^{\emph{\text{o}}}$ and $\{ y_{k+1}, \ldots, y_d
  \}\subset \mcC^{\text{\emph{e}}}$.  For $x\in \RR^d$, define the set
\begin{align*}
  \mcD(x) &= \big \{ x\big\}+ \big\{\textstyle{\sum_{i=1}^k} \alpha_i
  y_i + \textstyle{\sum_{j=k+1}^{d}\lambda_j y_j}\, : \, \alpha_i \in
  \RR, \, \lambda_j >0\big\}.  
\end{align*} 
and suppose that $x, z\in \RR^d$ are such that $z\in \mcD(x)$. 
\begin{enumerate}[(a)]
\item\label{dsaimppos1} For all $T>0$ there exist $t\in (0,T)$ and $N\in  \NN$ such that 
\begin{equation*}
  p_{t}^n(x,z) >0 \,\, \text{ for all } \,\, n\geq N.   
\end{equation*}
\item\label{dsaimppos3} If there exists an equilibrium point $y\in
  \RR^{d}$ of $\mcG=~\{ X_0 +
  \textstyle{\sum}_{j=1}^r u_j X_j \, : \, u_j \in \RR\}$ such that
  $y\in \mcD(x)$ and $z\in \mcD(y)$, then for all $T>0$ there exists $N\in \NN$ such that 
\begin{equation*}
  p_t^n(x,z)>0\,\, \text{ for all }\,\, t\geq T, \, n\geq N.  
\end{equation*}
\end{enumerate}
\end{theorem}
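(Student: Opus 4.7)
The plan is to apply the positivity criterion of Ben Arous and L\'eandre~\cite{BAL}: the density $p_t^n(x,z)$ is strictly positive whenever there exists a control $u\in L^2([0,t],\RR^r)$ whose associated deterministic trajectory $\phi_s^u$ with $\phi_0^u=x$ reaches $z$ at time $t$ and for which the Jacobian of the endpoint map $u\mapsto \phi_t^u$ is surjective. The set $\mcC$ is constructed so that the vector fields it contains are realizable as attainable tangent directions for this control system --- bidirectionally in the case of $\mcC^{\mathrm{o}}$, unidirectionally (forward only) in the case of $\mcC^{\mathrm{e}}$.

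For part~(a), I proceed in three steps. First, I establish commutator approximations. Given $V = \ad^{n(V_1,W)}V_1(W)$ with $V_1$ constant and $W$ polynomial, a Baker--Campbell--Hausdorff expansion shows that a short concatenation of flows along $\pm V_1$ (realizable by bursts of noise in direction $V_1\in\mcG_0$) with the drift/control trajectory driven by $W$ approximates the flow of $V$ at leading order in time. When $n(V_1,W)$ is odd, replacing $V_1$ by $-V_1$ flips the sign of the leading term, yielding bidirectional motion along $V$; when $n(V_1,W)$ is even, the sign is unchanged, so only the forward direction along $V$ is accessible. Iterating this through the inductive construction of $\mcG_j$ and tracking the parity of each bracket produces the dichotomy between $\mcC^{\mathrm{o}}$ and $\mcC^{\mathrm{e}}$. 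Second, for $z = x + \sum_{i=1}^k \alpha_i y_i + \sum_{j=k+1}^d \lambda_j y_j \in \mcD(x)$, I concatenate the approximations from the first step, one per basis direction, using arbitrary signs for the $\alpha_i$ and only positive magnitudes $\lambda_j$, to obtain a control that steers $x$ to $z$ in some time $t<T$. Third, along this trajectory the attainable tangent directions span $\RR^d$ because $\mcC$ does, so the endpoint Jacobian is surjective and Ben Arous--L\'eandre gives $p_t^n(x,z)>0$ for all sufficiently large $n$. The principal obstacle in~(a) is the careful bookkeeping of the iterated BCH expansions, in particular verifying that the odd/even structure propagates correctly when $W$ itself lies in $\mcG_j$ and is built from earlier cones.

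For part~(b), the equilibrium hypothesis gives access to arbitrarily long-time controls. Fix $T>0$. By part~(a) and the smoothness of the density, there exist $t_1,t_2>0$, $N_1\in\NN$, and open neighborhoods $U,U'$ of $y$ so that $p_{t_1}^n(x,y')>0$ on $U$ and $p_{t_2}^n(y'',z)>0$ on $U'$ for all $n\geq N_1$. The constant control $u^*$ with $X_0(y)+\sum_j u_j^* X_j=0$ keeps the control trajectory at $y$ indefinitely; by composing a short forward segment from $y'\in U$ into a $\mcD$-wedge near $y$, a hold near $y$ via a small perturbation of $u^*$ for any desired duration, and a final short segment bridging to $y''\in U'$, we construct a control of any total length $s\geq s_0$ whose endpoint Jacobian is surjective (again because $\mcC$ spans $\RR^d$). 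Another application of Ben Arous--L\'eandre then yields $p_s^n(y',y'')>0$ for all $s\geq s_0$, $y'\in U$, $y''\in U'$, and $n\geq N_2$. Chapman--Kolmogorov for the stopped process,
\begin{equation*}
  p_t^n(x,z) \;\geq\; \int_U\int_{U'} p_{t_1}^n(x,y')\, p_{t-t_1-t_2}^n(y',y'')\, p_{t_2}^n(y'',z)\, dy'\,dy'',
\end{equation*}
valid because $U,U'\subset B_n(0)$ for $n$ large, then completes the proof for every $t\geq T$, after adjusting $t_1,t_2$ so that $T\geq t_1+t_2+s_0$. The decisive difference from part~(a) is that the equilibrium provides a family of controls of arbitrarily large duration, so positivity extends to a half-line of times rather than a single time. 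The main subtlety is constructing the bridging control at $y$ with surjective Jacobian while also achieving the prescribed total duration; this is precisely where both the equilibrium assumption and the assumption that the two endpoints lie in the appropriate $\mcD$-wedges are used.
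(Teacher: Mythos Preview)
Your overall strategy---reduce to the Ben~Arous--L\'{e}andre criterion, realize the directions in $\mcC$ via iterated commutator approximations, and use the equilibrium to absorb arbitrary time---is exactly the paper's. The substantive gap is in Step~3 of part~(a). You assert that ``along this trajectory the attainable tangent directions span $\RR^d$ because $\mcC$ does, so the endpoint Jacobian is surjective.'' This inference is not justified. The Gramian $M_t^x(H)$ is built from $J_{s,t}^x(H)X_m$, and its invertibility is governed by what the \emph{first-order} data $\{X_m,\,[X_0,X_m](\Phi_s^x(H))\}$ span along the \emph{particular} path you chose; the higher-order brackets that populate $\mcC$ do not enter $M_t^x(H)$ directly. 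Your BCH limits show $z$ lies in the closure of the reachable set, but a limiting control need not have invertible Gramian.

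The paper closes this gap with two dedicated lemmas. First, it proves that if $\mcC$ is $d$-dimensional then, using the polynomial structure of $X_0$, the set $\bigcup_{w\in A}\{X_m,\,[X_0,X_m](w)\}$ already spans $\RR^d$ for \emph{any} nonempty open $A$: the identity $[X_0,X_m]=\sum p_i V_i$ valid on an open set extends everywhere because polynomials agreeing on an open set agree globally. Second, it shows that if these first-order brackets span along the trajectory, then $M_t^x(H)$ is invertible. With these in hand, the paper does not simply steer from $x$ to $z$; it deliberately routes the control through a finite sequence of points $x_{r+1},\dots,x_{r+j}$ chosen in disjoint open slabs of $\mcD(x)$ so that $\{X_m,[X_0,X_m](x_{r+l})\}$ together span $\RR^d$. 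This ``twisting'' of the path is the mechanism that forces the Gramian to be invertible, and it is absent from your sketch.

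For part~(b), your Chapman--Kolmogorov decomposition is a legitimate alternative to the paper's single-control construction (the paper steers $x\to\text{twist points}\to y$, holds at $y$ with the constant control $u^*$, then steers $y\to z$, all in one path). Your version is cleaner conceptually but inherits the same gap: you still need the Gramian invertible along the $y'\to y''$ control, and you need to be careful that for $y'$ in a full neighborhood $U$ of $y$ you can actually steer into the $\mcD$-wedge at $y$---this is not automatic, since $\mcD(y')$ may fail to contain $y$ when $y'$ is on the wrong side of $y$ in an even direction.
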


\begin{remark}
\label{rem:noexp}
Suppose that $\mcC$ is $d$-dimensional and that $x_t$ is
\emph{non-explosive}; that is, for every $x\in \RR^d$
\begin{align*}
\PP_x \{ \tau_\infty < \infty\} =0.  
\end{align*}
Then $x_t$ has a probability density function $p_t(x,y)$ with respect
to Lebesgue measure $dy$ which is smooth on $(0, \infty) \times \RR^d
\times \RR^d$.  Moreover, all conclusions of Theorem \ref{dsaimppos}
hold with $p^n_t(x,z)$ replaced by $p_t(x,z)$.
\end{remark}

\begin{remark}
  Even if $\mcC$ is $d$-dimensional, it is still
  possible that the set $\mcD(x)$ cannot be chosen to be the entire
  space $\RR^d$.  See Example \ref{exx2y2} in Section
  \ref{sec:examples}.
\end{remark}

\begin{remark}
  It is worth emphasizing that $y \in \RR^d$ can be an equilibrium
  without being an equilibrium point of the drift vector field $X_0$. For
  example, if $X_0(y_1,y_2)= (g(y_1,y_2)(1-y_2), f(y_2,y_1) ) $ for some scalar functions $f,g$ and $X_1=(0,1)$  then all points of the form
  $(y_1,1)$ are equilibrium points since $X(y_1,1) + u X_1= (0,0)$ if $u=- f(y_1,1)$. 

\end{remark}

Using the results of Theorem \ref{dsaimppos}, we will also show:

\begin{theorem} 
\label{thm:posinv}
Suppose that $\mcC$ is $d$-dimensional and $x_t$ is non-explosive. Let $\mcD(x)$ be as in the statement of Theorem \ref{dsaimppos}.  Then there is
  at most one invariant probability measure corresponding to the Markov process $x_t$ defined by \eqref{eqn:sde1}. Moreover, if such an invariant probability measure $\mu$ exists, then $\mu(dx)=m(x)\, dx$ for some smooth,
  non-negative function $m$ and if $x \in \supp(\mu)$ then for all $z\in \mcD(x)$, $m(z)> 0$. 

\end{theorem}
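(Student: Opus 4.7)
The plan is to first establish that any invariant probability measure $\mu$ has a smooth density with the claimed positivity, then deduce uniqueness from an algebraic property of the translation set defining $\mcD$. By Remark \ref{rem:noexp}, a jointly smooth transition density $p_t(x,y)$ exists on $(0,\infty)\times\RR^d\times\RR^d$, and Theorem \ref{dsaimppos} holds with $p_t^n$ replaced by $p_t$. Invariance together with Fubini gives $\mu(dy)=m(y)\,dy$ with $m(y)=\int p_t(x,y)\,\mu(dx)$ for any fixed $t>0$; smoothness of $m$ follows from hypoellipticity of the formal adjoint $L^{*}$ (whose Lie-algebraic H\"ormander condition is identical to that of $L$ since $[-X_0,X_j]=-[X_0,X_j]$, and zeroth-order terms are harmless) applied to the distributional identity $L^{*}m=0$. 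For the positivity claim, fix $x\in\supp(\mu)$ and $z\in\mcD(x)$: Theorem \ref{dsaimppos}(a) produces some $t_0>0$ with $p_{t_0}(x,z)>0$, joint continuity of $p_{t_0}$ gives an open neighborhood $U$ of $x$ on which $p_{t_0}(\cdot,z)>0$, and then $m(z)\geq\int_U p_{t_0}(y,z)\,\mu(dy)>0$ since $\mu(U)>0$. Writing $\mcD(x)=x+H$ with $H=\{\sum_{i=1}^k\alpha_i y_i+\sum_{j=k+1}^d\lambda_j y_j:\alpha_i\in\RR,\lambda_j>0\}$, this shows that the open set $O:=\{m>0\}$ satisfies $O+H\subset O$.

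For uniqueness, suppose for contradiction that $\mu_1\neq\mu_2$ are two invariant probability measures. Ergodic decomposition reduces to the case of two distinct ergodic measures, which are mutually singular, so their open positivity sets $O_i:=\{m_i>0\}$ must be disjoint (a common point would yield, by continuity, a neighborhood of positive $\mu_i$-mass for both $i$, contradicting singularity) while each is nonempty because $\int m_i\,dy=1$. The positivity result applied to each $\mu_i$ gives $O_i+H\subset O_i$. The key observation is the algebraic identity $H-H=\RR^d$, which holds because $\{y_1,\dots,y_d\}$ is a basis of $\RR^d$ and differences $\lambda_j-\lambda_j'$ of two positive reals sweep out all of $\RR$. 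Choosing $p_i\in O_i$ and writing $p_2-p_1=h_2-h_1$ with $h_1,h_2\in H$, the common point $p_1+h_1=p_2+h_2$ lies in $(p_1+H)\cap(p_2+H)\subset O_1\cap O_2$, contradicting disjointness.

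The main obstacle is the uniqueness step, since Theorem \ref{dsaimppos} only provides forward motion along an open wedge rather than full irreducibility, and the classical strong-Feller-plus-irreducibility criteria do not apply directly. The crucial input is that although $H$ is generally only a translate of an open half-space or cone, the hypothesis $\dim\mcC=d$ is exactly what forces $H-H=\RR^d$, providing the symmetry needed to make two $H$-invariant nonempty open sets intersect.
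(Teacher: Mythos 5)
Your positivity argument and the paper's are essentially the same: both express $m(z)=\int p_t(y,z)\,m(y)\,dy$, find an open set of positive $\mu$-mass near $x\in\supp(\mu)$, and use Theorem~\ref{dsaimppos}(\ref{dsaimppos1}) plus joint continuity of $p_t$ to bound the integrand below. The paper additionally splits into the cases $\mcD(x)=\RR^d$ and $\mcD(x)\neq\RR^d$ to verify that $z\in\mcD(y)$ holds for $y$ in the ball being integrated over, but since $\mcD(y)=y+H$ with $H$ open and independent of $y$, your streamlined version handles both cases at once.

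The genuinely different part is uniqueness. The paper's proof of Theorem~\ref{thm:posinv} only establishes the positivity and smoothness assertions and does not spell out why there is at most one invariant measure. You supply a self-contained argument: pass to two mutually singular ergodic measures, show their open positivity sets $O_1,O_2$ are disjoint and satisfy $O_i+H\subset O_i$, and derive a contradiction from the algebraic identity $H-H=\RR^d$ (which holds precisely because $\{y_1,\dots,y_d\}$ is a basis and differences of positive reals sweep out $\RR$). This is a nice and correct observation that cleanly closes the gap left implicit in the paper, exploiting exactly the hypothesis $\dim\mcC=d$; the only step worth stating a bit more carefully is the disjointness of $O_1$ and $O_2$, which requires noting that on a common ball $B$ both densities are bounded below by positive constants, forcing $\text{Leb}(B\cap A_i)=0$ for the singularity sets $A_i$ and hence $\mu_2(B)=0$, a contradiction.
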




\section{Examples}
\label{sec:examples}
Before proving the main results, we apply them to specific examples to
show their utility.  A ``non-example", that is an example where
Theorem \ref{dsaimppos} is not applicable, is given in the next
section in Remark \ref{nonExample} as it fits in better with the
discussion there.

  \begin{example}
As a first example, we consider the Langevin dynamics on $\RR^{2d}$, $d\geq 1$, 
\begin{equation}\label{eqn:langevin}
  \begin{aligned}
    d x_t &= [-\gamma x_t - \nabla F(y_t) ]\, dt + \sum_{j=1}^{d} \sigma_j \, dW^{j}_t\\
    dy_t &= x_t \, dt     
  \end{aligned}
\end{equation}
where $x_t, y_t \in \RR^{d}$, $\gamma >0$ is a constant, $F\in
C^\infty(\RR^d: \RR)$, $\sigma_j \in \RR^d$ and the $W_t^{j}$ are
independent standard Wiener processes.  So that solutions to
\eqref{eqn:langevin} do not explode in finite time, we assume that $F$
satisfies the one-sided Lipschitz condition and concavity and growth
assumptions of Condition 3.1 of \cite{MSH}.  A prototypic example
of a potential which satisfies these assumptions is  $F(y)=\tfrac14|y|^4-\tfrac12|y|^2$.

As a consequence of Theorem \ref{dsaimppos}, we now prove: 
\begin{corollary}
\label{cor:langevin}
If $\text{\emph{span}}\{\sigma_1, \ldots, \sigma_d\}=\RR^d$, then for
all $(x,y), (x',y') \in \RR^{2d}$ and $t>0$
\begin{align*}
p_{t}((x,y), (x',y') ) >0.  
\end{align*}
\end{corollary}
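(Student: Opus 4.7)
The plan is to apply Theorem~\ref{dsaimppos}(\ref{dsaimppos3}) and then invoke Remark~\ref{rem:noexp} to pass from $p_t^n$ to the global density $p_t$. Writing the state as $(x,y)\in\RR^d\times\RR^d$, the drift is $X_0(x,y)=(-\gamma x-\nabla F(y),\,x)$ and the constant noise vector fields are $X_j=(\sigma_j,0)$ for $j=1,\ldots,d$.

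First I would compute $\mcC$. For any $V=(\sigma,0)\in\mcG_0$ with $\sigma\neq 0$, the map $\lambda\mapsto X_0(\lambda V)$ has degree one in $\lambda$, so $n(V,X_0)=1$ is odd. A direct computation yields
\begin{equation*}
[X_j,X_0]\;=\;(-\gamma\sigma_j,\;\sigma_j),
\end{equation*}
so $\mcG_1^{\text{o}}$ contains both $(\sigma_j,0)$ and $(-\gamma\sigma_j,\sigma_j)$ for every $j$. Taking $\RR$-linear combinations produces the vectors $(0,\sigma_j)$, and since $\{\sigma_1,\ldots,\sigma_d\}$ spans $\RR^d$ this forces $\text{span}(\mcC^{\text{o}})=\RR^{2d}$. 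Hence $\mcC$ is $2d$-dimensional and we may choose a basis lying entirely in $\mcC^{\text{o}}$ (i.e.\ $k=2d$ in the notation of Theorem~\ref{dsaimppos}), so $\mcD((x,y))=\RR^{2d}$ for every $(x,y)$, with no positivity restriction on any coefficient.

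Next I would exhibit an equilibrium point of $\mcG=\{X_0+\sum_j u_jX_j : u_j\in\RR\}$. Solving $X_0(x_0,y_0)+\sum_j u_jX_j=0$ forces $x_0=0$ and $\sum_j u_j\sigma_j=\nabla F(y_0)$, which is solvable because the $\sigma_j$ span $\RR^d$; in particular $(0,0)$ is such an equilibrium. Since $\mcD=\RR^{2d}$, the two containment hypotheses $y\in\mcD((x,y))$ and $(x',y')\in\mcD(y)$ in Theorem~\ref{dsaimppos}(\ref{dsaimppos3}) hold automatically, and the conditions on $F$ imported from \cite{MSH} give non-explosion, so by Remark~\ref{rem:noexp} the theorem applies with $p_t^n$ replaced by $p_t$: for every $T>0$, $p_t((x,y),(x',y'))>0$ for all $t\geq T$. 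Letting $T\downarrow 0$ finishes the proof. The only substantive step is the single bracket computation above; because the relevant relative degrees are odd and $\mcC^{\text{o}}$ already exhausts $\RR^{2d}$, no even-degree cone contributions or higher iterated brackets need to be tracked, which is precisely what makes the conclusion as strong as strict positivity for all $t>0$ and all pairs of endpoints.
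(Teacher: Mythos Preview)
Your proof is correct and follows essentially the same approach as the paper: compute $[X_j,X_0]=(-\gamma\sigma_j,\sigma_j)$, observe that the $X_j$ together with the $[X_j,X_0]$ give $2d$ constant vectors in $\mcC^{\text{o}}$ spanning $\RR^{2d}$ so that $\mcD(\cdot)=\RR^{2d}$, exhibit the origin as an equilibrium of $\mcG$, and conclude via Theorem~\ref{dsaimppos}(\ref{dsaimppos3}) and Remark~\ref{rem:noexp}. The only cosmetic difference is that you make explicit the step of letting $T\downarrow 0$ to cover all $t>0$, which the paper leaves implicit.
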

\begin{proof}
  Let $\boldsymbol{0}=(0,0,\ldots, 0)\in \RR^d$ and let $\mcG=\{ X_0 +
  \sum_{j=1}^d u_j X_j\, : \, u_j \in \RR\}$ where
\begin{align*}
X_0(x,y) = 
 \left( \begin{array}{c}
-\gamma x + \nabla F(y)\\
x \end{array} \right) \qquad \text{and} \qquad X_j(x,y) = \left( \begin{array}{c}
\sigma_j \\
 \textbf{0}\end{array} \right).   
\end{align*}
We begin by computing $\mathcal{C}$ (defined in the introduction) corresponding to equation \eqref{eqn:langevin}.  Since $n( X_0, X_j)=1$ for all $j$, we see that  
\begin{align*}
\mcG_{1}^{\text{o}} &\supset \{[X_j, X_0]\,: \, j=1,2, \ldots, d\}
\end{align*}          
and 
\begin{align*}
[X_j, X_0] (x,y)=  \left( \begin{array}{c}
-\gamma \sigma_j \\
\sigma_j \end{array} \right) .
\end{align*}
Hence, in particular, $\mathcal{C} \supset \{ X_j, [X_j, X_0]\, : \, j=1,2, \ldots, d\}$.  Since the vectors $\sigma_1, \ldots, \sigma_d$ are linearly independent, it follows that $\mathcal{C}$ has a basis.  Additionally, since $\mathcal{C}^{\text{o}} \supset  \{ X_j, [X_j, X_0]\, : \, j=1,2, \ldots, d\}$ we can choose a basis so that $\mathcal{D}(x,y)=\RR^{2d}$ for all $(x,y) \in \RR^{2d}$.  To finish proving the result, we claim that the origin $(\boldsymbol{0}, \boldsymbol{0})\in \RR^{2d}$ is an equilibrium point of $\mcG$.  Indeed, since
\begin{align*}
X_0(\boldsymbol{0}, \boldsymbol{0}) + \sum_{j=1}^d  u_j X_j(\boldsymbol{0}, \boldsymbol{0}) = \left( \begin{array}{c}
-\nabla F(\boldsymbol{0})\\
\boldsymbol{0} \end{array} \right) +   \left( \begin{array}{c}
\sum_{j=1}^d u_j \sigma_j \\
\boldsymbol{0} \end{array} \right) 
\end{align*}
and the $\sigma_j$ form a basis of $\RR^d$, we may choose real numbers $u_j \in \RR$ such that  
\begin{align*}
X_0(\boldsymbol{0}, \boldsymbol{0}) + \sum_{j=1}^d  u_j X_j(\boldsymbol{0}, \boldsymbol{0}) =  \left( \begin{array}{c}
\boldsymbol{0}\\
\boldsymbol{0} \end{array} \right). 
\end{align*}
In light of Remark \ref{rem:noexp}, applying Theorem ~\ref{dsaimppos}~(\ref{dsaimppos3}) finishes the proof of Corollary \ref{cor:langevin}. 
\end{proof}
  
 \end{example}


\begin{example}\label{exx2y2} Let $a_{1}, a_{2} \in \RR$,
$\alpha_2>\alpha_{1}>0$, and $\epsilon>0$.  With motivations from
turbulent transport of inertial particles, the stochastic differential equation on $\RR^{2}$ given by
\begin{equation}
\begin{aligned}
\label{eqn:sdeeuc} 
 dx_t&=(a_1 x_t -\alpha_1 x_t^2+y_t^2) \, dt \\ 
 dy_t &= (a_{2} y_t -\alpha_{2} x_t y_t ) \, dt + \epsilon \, dW^2_t
\end{aligned}
\end{equation} 
is considered in \cite{BHW}.  Here, we strengthen the results of
Section~4 of this work. A more hands on application of some of the
ideas of this note were applied to a specific case of this example in
Section~11 of \cite{AKM}.  Applying Theorem~2.1 of \cite{BHW}, we first
note that $(x_t, y_t)$ is non-explosive.

We now prove:
\begin{corollary}
\label{cor:2deuc}
Suppose that $(x,y)\in \RR^2$ satisfies 
\begin{align*}
x < \frac{a_1-|a_{1}|}{2\alpha_1}\,\text{ or }\,x\geq
\frac{a_1+|a_{1}|}{2\alpha_1}.
\end{align*} 
Then for all $t>0$ and $(x',y')\in \RR^2$ with $x' > x$
\begin{align*}
p_{t}((x,y), (x', y'))>0.
\end{align*}
Otherwise if $(x,y)\in \RR^2$ satisfies 
\begin{align*}
\frac{a_1 - |a_{1}|}{ 2\alpha_1 }\leq x \leq \frac{a_1+ |a_{1}|}{ 2\alpha_1 }, 
\end{align*}
then for all $t>0$ and $(x',y') \in \RR^2$ with $x' > \frac{a_1 + |a_1|}{2\alpha_1}$
\begin{equation*} 
p_{t}((x,y), (x',y')) >0.
\end{equation*}
\end{corollary}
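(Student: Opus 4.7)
The plan is to apply Theorem~\ref{dsaimppos}(b) together with Remark~\ref{rem:noexp}, the latter being available because the authors have already noted that the process is non-explosive. The ingredients to produce are a basis of $\mcC$ giving an explicit form of $\mcD(\cdot)$ and, for each admissible pair $(x,y)$, $(x',y')$ in the corollary, an equilibrium point of $\mcG = \{X_0 + u_1 X_1 : u_1 \in \RR\}$ lying in $\mcD((x,y))$ and whose $\mcD$ contains $(x',y')$.

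First I would show $\mcC$ is two-dimensional. Since $X_1 = \epsilon\partial_y$ is constant and $\lambda \mapsto X_0^1(\lambda X_1) = \lambda^2 \epsilon^2$ has degree two, $n(X_1, X_0) = 2$ is even. A short bracket calculation then gives $\ad^{2}X_1(X_0) = (2\epsilon^2, 0)$, a constant vector field lying in $\mcG_1^{\text{e}}$ and hence in $\mcC^{\text{e}}$. Together with $X_1 = (0, \epsilon) \in \mcG_0 \subset \mcC^{\text{o}}$, this furnishes a basis $y_1 = (0, \epsilon) \in \mcC^{\text{o}}$, $y_2 = (2\epsilon^2, 0) \in \mcC^{\text{e}}$ of $\mcC$. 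Identifying constant vector fields with their values in $\RR^2$,
\[
\mcD((x, y)) = \{(x'', y'') \in \RR^2 : x'' > x\},
\]
which matches the one-sided direction of positivity appearing in the corollary.

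Next I would classify the equilibria. Setting $X_0(p,q) + u_1 X_1 = 0$ reduces to $q^2 = p(\alpha_1 p - a_1)$, since the second coordinate equation is trivially solvable in $u_1$ for any $(p,q)$. Hence the equilibrium set is exactly the real locus of this curve, which forces $p \in (-\infty, \min(0, a_1/\alpha_1)] \cup [\max(0, a_1/\alpha_1), \infty)$, and the two thresholds in the statement of the corollary are precisely $(a_1 \mp |a_1|)/(2\alpha_1) = \min(0, a_1/\alpha_1),\, \max(0, a_1/\alpha_1)$, i.e., the endpoints of the ``equilibrium-free'' interval. It therefore suffices to produce an equilibrium $(p,q)$ with $x < p < x'$, since this places $(p, q) \in \mcD((x,y))$ and $(x', y') \in \mcD((p, q))$. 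In the first regime either $x < \min(0, a_1/\alpha_1)$, in which case $(x, \min(0, a_1/\alpha_1)] \cap (-\infty, x')$ is nonempty and any $p$ in it is admissible, or $x \geq \max(0, a_1/\alpha_1)$, in which case any $p \in (x, x')$ automatically lies in $[\max(0, a_1/\alpha_1), \infty)$. In the second regime, $x \leq \max(0, a_1/\alpha_1) < x'$, so any $p \in (\max(0, a_1/\alpha_1), x')$ works. With such a $(p,q)$, Theorem~\ref{dsaimppos}(b) together with Remark~\ref{rem:noexp} yields $p_t((x,y), (x',y')) > 0$ for every $t > 0$.

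The only real obstacle is the bookkeeping in this equilibrium-placement case analysis, especially making sure that the strict inequality $x < p$ can be arranged when $x$ coincides with one of the boundary thresholds; all substantive analytic content is absorbed by Theorem~\ref{dsaimppos}.
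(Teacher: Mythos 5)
Your proposal is correct and follows essentially the same route as the paper: compute $\ad^{2}X_1(X_0)=2\epsilon^{2}\partial_x\in\mcC^{\text{e}}$ so that $\mcD((x,y))=\{(x'',y''):x''>x\}$, identify the equilibria of $\mcG$ as the curve $y^{2}=x(\alpha_1 x-a_1)$, whose admissible $x$-range has endpoints $(a_1\mp|a_1|)/(2\alpha_1)$, and then invoke Theorem \ref{dsaimppos}(\ref{dsaimppos3}) together with Remark \ref{rem:noexp}. Your explicit placement of an equilibrium abscissa strictly between $x$ and $x'$ (including at the boundary thresholds) merely spells out a step the paper leaves implicit.
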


\begin{remark}
\label{rem:notoptimal}
  It is important to point out that Corollary~\ref{cor:2deuc} is not sharp. For example if $a_1=a_2=0$, $\alpha_1=1$ and $\alpha_2=2$, it was shown in Section 11 of \cite{AKM} that, in addition to the result above, for all $(x,y), (x',y')\in \RR^2$ with $x'>0$ 
  $$p_{t}((x,y), (x', y'))>0 $$ for all $t>0$ sufficiently large. The
  weakness of our result is due to the fact that Theorem~\ref{dsaimpposc} does not fully exploit the flow along $X_0$ in favor of making general statements for any positive time. However, Corollary~\ref{cor:2deuc} is more than sufficient to prove unique ergodicity in equation \eqref{eqn:sdeeuc}.  Nevertheless, it is not hard to bootstrap from Corollary~\ref{cor:2deuc} to obtain the full (sharp) result proved in \cite{AKM}.
\end{remark}

\begin{proof}
As in the previous example, we begin by computing the set $\mcC$ corresponding to equation \eqref{eqn:sdeeuc}.  Let 
\begin{equation*}
 \mcG= \big\{X_0 + u  X_1 \, : \, u \in
\RR \big\}
\end{equation*}
where $X_0=(a_1 x- \alpha_1 x^{2} +
y^2) \partial_{x} + (a_2 y - \alpha_2 x y)\partial_{y}$ and $X_1
= \partial_y$.  Since $n(X_0, X_1)=2$, we find that $\text{ad}^{2}X_1(X_0)=2\partial_{x} \in \mcG_{1}^{\text{e}}$.  Let   
\begin{equation*} \mathcal{D}(x,y) = \{(x, y) + u (0,1) + \lambda
(1,0) \, : \, u \in \RR, \, \lambda > 0 \}.
\end{equation*} 
As opposed to the previous example, the set $\mcD(x,y)$ is not the entire space.  Hence we must make sure we have enough equilibrium points in the right locations.

Consider the polynomial equation
\begin{align*}
a_1 x- \alpha_1 x^2 + y^2 &=0\\
a_2 y -\alpha_2 x y + u &= 0  
\end{align*}
where $u \in \RR$.  Clearly, any pair $(x,y)\in \RR^2$ satisfying the above equations for some $u\in \RR$ is an equilibrium point of $\mathcal{G}$.  In particular, we may solve $a_1 x - \alpha_1 x^2 + y^2 =0$ producing 
\begin{align*}
x= \frac{a_1 \pm \sqrt{a_1^2 + 4 \alpha_1 y^2}}{2\alpha_1}.  
\end{align*}    
Since we may pick $u=\alpha_2 xy - a_2 y$, we therefore deduce that all points $(x,y)\in \RR^{2}$ such that either
\begin{equation*} x \geq \frac{a_{1} + |a_{1}|}{2\alpha_1} \qquad
\text{ or } \qquad x\leq \frac{a_{1}-|a_{1}|}{2\alpha_1}
\end{equation*} are equilibrium points for the control system $\mcG$.  Hence Remark \ref{rem:noexp} now implies Corollary \ref{cor:2deuc}.   
\end{proof}

\end{example}


\begin{example}\label{exburg}

Let $\nu>0$ be a constant.  We now study Galerkin truncations of the following randomly forced two-dimensional viscous Burgers' equation
\begin{equation}\label{burgeq}
\partial_{t} u(\mathbf{x}, t )+ (u(\mathbf{x},t) \cdot
\nabla_{\mathbf{x}}) u( \mathbf{x}, t)= \nu \Delta_{\mathbf{x}} u(\mathbf{x}, t
) + \xi(\mathbf{x}, t)
\end{equation} 
with periodic boundary conditions on the torus $\mathbb{T}^2=[0,
2\pi]^2$.  Here, we assume that there is no mean flow and that $\xi$
is a Gaussian process which is white in time and colored in space.
To emphasize, we do not require the divergence free condition
$\nabla \cdot u =0$; hence, \eqref{burgeq} is not the 2D Navier Stokes
equation. Moreover, we do not restrict ourselves to gradient solutions as is often done when considering the multidimensional Burgers equation.  In the dynamics \eqref{burgeq}, we are precisely interested how the divergence free forcing spreads to the non-divergence free (gradiant-like directions). Since one does not have global solutions in this setting, here we must make use of the stopped processes.

Let us now be more precise. Writing
\begin{align*}
\sum_{0\neq \mathbf{k} \in
\ZZ^{2}} u_{\mathbf{k}}(t) e^{-i\langle \mathbf{k}, \mathbf{x}\rangle},
\end{align*}
where $\langle \cdot, \cdot \rangle$ denotes the dot product, and fixing a positive integer $N\geq 2$, we consider the following stochastic differential equation on $\CC^{2((2N+1)^2-1)}$
\begin{eqnarray}\label{eqn:sburgmodes} d u_{\mathbf{k}} &=& [i
F^{N}_{\mathbf{k}}(u) - \nu | \mathbf{k}|^{2}u_{\mathbf{k}}]\, dt
+\frac{\mathbf{k}^{\perp}}{|\mathbf{k}|^{2}}( \sigma_{\mathbf{k}}\,
dB^{\mathbf{k}, (1)}_t + i \sigma_{\mathbf{k}}' \, dB^{\mathbf{k},
(2)}_t) \\ \nonumber &\,&\, \, \, \, \, + \,
\frac{\mathbf{k}}{|\mathbf{k}|^{2}}( \gamma_{\mathbf{k}}\,
dW^{\mathbf{k}, (1)}_t + i \gamma_{\mathbf{k}}' \, dW^{\mathbf{k},
(2)}_t)
\end{eqnarray}
where 
\begin{itemize}
\item $u_\mathbf{k} \in \CC^2$;
\item the equation is over all indices $\mathbf{k} \in
H_{N}=\big\{\mathbf{k} \in \ZZ^{2}\setminus \{(0,0)\}\, : \,
\|\mathbf{k} \|_{\infty}\leq N \big\}$;
\item  \begin{equation*} F_{\mathbf{k}}^{N}(u)=\sum_{\mathbf{l},
\mathbf{k}-\mathbf{l}\in H_{N}} \langle u_{\mathbf{l}},
\mathbf{k}-\mathbf{l} \rangle u_{\mathbf{k}-\mathbf{l}};
\end{equation*}
\item $\sigma_{\mathbf{k}}, \sigma_{\mathbf{k}}',
\gamma_{\mathbf{k}}, \gamma_{\mathbf{k}}'\in \RR$;
\item $\mathbf{k}^{\perp} =(k_1, k_2)^{\perp}=(-k_2, k_1)$; 
\item $\{
B^{\mathbf{k}, (1)}_t , B^{\mathbf{k}, (2)}_t , W^{\mathbf{k}, (1)}_t
, W^{\mathbf{k}, (2)}_t\}_{\mathbf{k} \in H_{N}}$ is a set of
independent Brownian motions.  
\end{itemize}


To further illuminate the discussion, we first split the equation into incompressible and compressible directions. To this end, write  
\begin{eqnarray*} u_{\mathbf{k}}&=&
w_{\mathbf{k}}\frac{\mathbf{k}^{\perp}}{|\mathbf{k}|^{2}}+
q_{\mathbf{k}}\frac{\mathbf{k}}{|\mathbf{k}|^{2}}\\
F_{\mathbf{k}}(u)&=& F_{\mathbf{k}}^{\perp} (w, q)
\frac{\mathbf{k}^{\perp}}{|\mathbf{k}|^{2}}+
F_{\mathbf{k}}^{\parallel}(w, q) \frac{\mathbf{k}}{|\mathbf{k}|^{2}}
\end{eqnarray*} where $w_{\mathbf{k}}, q_{\mathbf{k}}\in \CC$.  In particular, equation \eqref{eqn:sburgmodes} now becomes
\begin{eqnarray}\label{eqn:burgcsde} dw_{\mathbf{k}}&=&
[-\nu|\mathbf{k}|^{2} w_{\mathbf{k}} + i
F_{\mathbf{k}}^{\perp}(w,q)]\, dt +\sigma_{\mathbf{k}}\,
dB^{\mathbf{k}, (1)}_t + i \sigma_{\mathbf{k}}' \, dB^{\mathbf{k},
(2)}_t\\ \nonumber dq_{\mathbf{k}}&=&[-\nu
|\mathbf{k}|^{2}q_{\mathbf{k}}+ i F_{\mathbf{k}}^{\parallel}(w,q)]\,
dt + \gamma_{\mathbf{k}} \, dW^{\mathbf{k}, (1)}_t + i
\gamma_{\mathbf{k}}' \, dW^{\mathbf{k}, (2)}_{t} 
\end{eqnarray} 
for some $F_\mathbf{k}^\perp$, $F_\mathbf{k}^\parallel$ to be computed in a moment.  Note that \eqref{eqn:burgcsde} evolves on $\CC^{2((2N+1)^2-1)}=\CC^{8N(N+1)}$ for all $t< \tau_\infty$.   

We will now use Theorem \ref{dsaimppos} to prove the following result:
\begin{theorem}  
\label{thm:burg}
Suppose that $$\{ \mathbf{k} \in H_N \, : \, \sigma_\mathbf{k} \neq 0, \sigma_{\mathbf{k}}' \neq 0\} \supset \{\mathbf{k} \in H_{N} \, :
\, \| \mathbf{k} \|_{\infty}=1 \}.$$  Then for all $(w, q), (w', q') \in \CC^{8N(N+1)}$ and $T>0$, there exists $N\in \NN$ large enough so that 
\begin{align*}
p_t^n ((w,q), (w',q'))>0\,\, \text{ for all } t\geq T, \, n\geq N.    
\end{align*} 
\end{theorem}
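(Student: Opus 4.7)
The plan is to apply Theorem~\ref{dsaimppos}\,(\ref{dsaimppos3}) with the origin as the equilibrium point. Every term in the drift of \eqref{eqn:burgcsde} vanishes at $(w,q)=0$, so $X_0(0)=0$; taking all control parameters to be zero then exhibits the origin as an equilibrium of the full control system $\mcG$. Consequently, once we verify that $\mcC^{\text{o}}$ alone spans the real state space $\RR^{16N(N+1)}$, we may take $\mcD((w,q))=\RR^{16N(N+1)}$ for every initial $(w,q)$, and the theorem then delivers positivity of $p_t^n((w,q),(w',q'))$ for all $t\geq T$ and all $n$ sufficiently large.

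Write $X_0=L+Q$ with linear dissipation $L(u)=(-\nu|\mathbf{k}|^{2}u_{\mathbf{k}})_{\mathbf{k}}$ and quadratic convection $Q(u)=\mathcal{B}(u,u)$ for the symmetric bilinear form $\mathcal{B}$ associated with the Burgers nonlinearity. For a single noise direction $V\in\mcG_0$ (supported at a unit mode $\mathbf{k}$, $\|\mathbf{k}\|_{\infty}=1$, and pointing in the incompressible subspace $\mathbf{k}^{\perp}/|\mathbf{k}|^{2}$) one checks $\mathcal{B}(V,V)=0$, because $\langle \mathbf{k}^{\perp},\mathbf{k}\rangle=0$, so $X_0(\lambda V)=\lambda L(V)$ and $n(V,X_0)=1$ (odd); hence $[V,X_0]\in\mcG_1^{\text{o}}$. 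Since $[V,X_0](x)=L(V)+2\mathcal{B}(V,x)$ is affine in $x$, bracketing with a second noise direction $V'\in\mcG_0$ gives $[V',[V,X_0]]=2\mathcal{B}(V',V)$, a \emph{constant} vector field, with relative degree $n(V',[V,X_0])=1$ (odd) whenever $\mathcal{B}(V',V)\neq 0$, because $L(V)\neq 0$. Thus $\mathcal{B}(V',V)\in\mcC^{\text{o}}$, and a direct computation yields, for noise directions at non-parallel unit modes $\mathbf{k}_1,\mathbf{k}_2$,
\[
\mathcal{B}(V_{\mathbf{k}_1},V_{\mathbf{k}_2}) \;\propto\; \frac{\det(\mathbf{k}_1,\mathbf{k}_2)}{|\mathbf{k}_1|^{2}|\mathbf{k}_2|^{2}}\,(\mathbf{k}_2-\mathbf{k}_1)^{\perp},
\]
a new constant vector field supported at mode $\mathbf{k}_1+\mathbf{k}_2$ that generically has both incompressible and compressible components at that mode.

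Iterating the scheme, we repeatedly bracket a constant $V\in\mcC^{\text{o}}$ produced at an earlier stage against $[V''',X_0]$ for some noise direction $V'''\in\mcG_0$, obtaining $[V,[V''',X_0]]=2\mathcal{B}(V,V''')$, a new constant at mode $\mathbf{k}_V+\mathbf{k}_{V'''}$. The relative degree remains $1$ (odd) whenever $\mathcal{B}(V,V''')\neq 0$, since $L(V''')\neq 0$ for every nonzero mode. Because the unit wavevectors generate $\ZZ^{2}$ under addition, a finite iteration of this construction reaches every mode in $H_N$. The main obstacle is the bookkeeping: one must check, mode by mode, that by varying the pairs of unit generators and previously-produced constants one can arrange for the resulting directions to fill both the $w$- and $q$-subspaces of $\CC^{2}$ at every $\mathbf{k}\in H_N$. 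This is a standard mode-spanning argument, analogous to those used for 2D stochastic Navier--Stokes, leveraging that $\det(\mathbf{k}_1,\mathbf{k}_2)$ is nonzero for many unit pairs and that $(\mathbf{k}_2-\mathbf{k}_1)^{\perp}$ can be steered to any prescribed direction at a given composite mode by appropriate choice of unit summands. Once $\mcC^{\text{o}}$ is confirmed to span $\RR^{16N(N+1)}$, Theorem~\ref{dsaimppos}\,(\ref{dsaimppos3}) applied at the origin completes the proof.
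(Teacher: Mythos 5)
Your overall strategy is the same as the paper's: compute $\mcC$ by bracketing noise directions (relative degree $1$, hence odd) with $X_0$ and then with the resulting brackets, conclude that $\mcC^{\text{o}}$ spans so that $\mcD(w,q)$ is the whole space, note that the origin is an equilibrium point of $\mcG$, and invoke Theorem~\ref{dsaimppos}~(\ref{dsaimppos3}) for the stopped densities. Your first- and second-order bracket computations are consistent with \eqref{blie1}--\eqref{blie2}.

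However, there is a genuine gap at the decisive step: you assert, rather than prove, that the constant fields so produced fill both the incompressible ($w$) and compressible ($q$) directions at every mode of $H_N$, dismissing it as ``standard bookkeeping, analogous to 2D stochastic Navier--Stokes.'' That analogy does not apply here, because in the Navier--Stokes setting there is no compressible subspace to reach, while the whole difficulty in this example is precisely how forcing spreads from divergence-free to gradient-like directions. Each individual bracket delivers only one fixed linear combination of $X_{\mathbf{j}+\mathbf{m}},\widetilde{X}_{\mathbf{j}+\mathbf{m}}$ (or $Y_{\mathbf{j}+\mathbf{m}},\widetilde{Y}_{\mathbf{j}+\mathbf{m}}$), and your own formula already shows the obstruction: for unit modes of equal Euclidean length the incompressible coefficient vanishes identically (e.g.\ $\mathbf{j}=(1,0)$, $\mathbf{m}=(0,1)$ yields only $\widetilde{Y}_{(1,1)}$), and modes such as $(2,2)$ admit no decomposition into two non-parallel forced unit modes at all. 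To span one must produce a second, independent combination at each new mode; in the paper this is done by also bracketing the previously obtained compressible constants $\widetilde{X}_{\mathbf{m}},\widetilde{Y}_{\mathbf{m}}$ against $[X_{\mathbf{j}},X_0]$ (equations \eqref{blie3}--\eqref{blie4}), proving linear independence of the pairs \eqref{blie1},\eqref{blie3} and \eqref{blie2},\eqref{blie4} whenever $|\mathbf{j}|\neq|\mathbf{m}|$ and $\langle\mathbf{j}^{\perp},\mathbf{m}\rangle\neq 0$ (via the impossibility of $|\mathbf{j}|^{2}+|\mathbf{m}|^{2}+2\langle\mathbf{m},\mathbf{j}\rangle=0$), and then running an induction on $\|\mathbf{k}\|_{\infty}$ that exhibits, for every target mode --- including the special cases on the axes and the diagonals $|y|=|x|$ --- an admissible decomposition $\mathbf{k}=\mathbf{j}+\mathbf{m}$ with $\|\mathbf{j}\|_{\infty}=1$ satisfying these constraints. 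Until you supply this independence criterion and the mode-by-mode induction, the claim that $\mcC^{\text{o}}$ spans $\RR^{16N(N+1)}$, and hence the application of Theorem~\ref{dsaimppos}, is unsubstantiated.
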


\begin{remark}
It is interesting to note that, even if the process $(w_t, q_t)$ is assumed to be incompressible initially; that is, $(w_0, q_0)=(w,0)\in \CC^{8N(N+1)}$, a small amount of low mode forcing ensures that any mixture of incompressible and compressible states becomes instantaneously possible.  As we will see in the proof below, this cannot happen if we do not force the incompressible directions.  In particular, if we assume that the process $(w_t, q_t)$ is initially compressible; that is, $(w_0, q_0)=(0,q)$ and $\sigma_\mathbf{k} = \sigma_\mathbf{k}' =0$ for all $\mathbf{k}\in H_N$, then $w_t \equiv 0$ for all $t\geq 0$.         
\end{remark}

\begin{proof}[Proof of Theorem \ref{thm:burg}]
We will first write out and symmetrize the nonlinear terms $F_\mathbf{k}^\perp$ and $F_\mathbf{k}^\parallel$.  Using the relations $\langle \mathbf{k}^{\perp}, \mathbf{l}
\rangle=-\langle \mathbf{k}, \mathbf{l}^{\perp} \rangle$ and $\langle
\mathbf{k}^{\perp}, \mathbf{l}^{\perp}\rangle =\langle \mathbf{k},
\mathbf{l} \rangle$, we find that
\begin{eqnarray*} F_{\mathbf{k}}^{\perp}(w, q) &=& \sum_{\mathbf{l}, \, \mathbf{k}-\mathbf{l}\in
H_{N}} w_{\mathbf{l}}w_{\mathbf{k}-\mathbf{l}}\frac{\langle
\mathbf{l}^{\perp}, \mathbf{k} \rangle \langle \mathbf{k}-\mathbf{l},
\mathbf{k}\rangle}{|\mathbf{l}|^{2}|\mathbf{k}-\mathbf{l}|^{2}} +
w_{\mathbf{l}}q_{\mathbf{k}-\mathbf{l}}\frac{\langle
\mathbf{l}^{\perp}, \mathbf{k}
\rangle^{2}}{|\mathbf{l}|^{2}|\mathbf{k} -\mathbf{l}|^{2}}\\ &\, & \,
\, + \sum_{\mathbf{l}, \, \mathbf{k}-\mathbf{l}\in
H_{N}}
q_{\mathbf{l}}w_{\mathbf{k}-\mathbf{l}}\frac{\langle \mathbf{l},
\mathbf{k}-\mathbf{l} \rangle \langle \mathbf{k}-\mathbf{l},
\mathbf{k}\rangle}{|\mathbf{l}|^{2}|\mathbf{k}-\mathbf{l}|^{2}} -
q_{\mathbf{l}}q_{\mathbf{k}-\mathbf{l}}\frac{\langle \mathbf{l},
\mathbf{k}-\mathbf{l} \rangle \langle \mathbf{l},
\mathbf{k}^{\perp}\rangle }{|\mathbf{l}|^{2}|\mathbf{k}
-\mathbf{l}|^{2}}
\end{eqnarray*} and
\begin{eqnarray*} F_{\mathbf{k}}^{\parallel}(w, q) &=&
\sum_{\mathbf{l}, \, \mathbf{k}-\mathbf{l}\in
H_{N}}
-w_{\mathbf{l}}w_{\mathbf{k}-\mathbf{l}}\frac{\langle
\mathbf{l}^{\perp}, \mathbf{k}
\rangle^{2}}{|\mathbf{l}|^{2}|\mathbf{k}-\mathbf{l}|^{2}} +
w_{\mathbf{l}}q_{\mathbf{k}-\mathbf{l}}\frac{\langle
\mathbf{l}^{\perp}, \mathbf{k} \rangle \langle \mathbf{k}-\mathbf{l},
\mathbf{k} \rangle}{|\mathbf{l}|^{2}|\mathbf{k} -\mathbf{l}|^{2}}\\
&\, & \, \, \sum_{\mathbf{l}, \, \mathbf{k}-\mathbf{l}\in
H_{N}}-
q_{\mathbf{l}}w_{\mathbf{k}-\mathbf{l}}\frac{\langle \mathbf{l},
\mathbf{k}-\mathbf{l} \rangle \langle \mathbf{l}^{\perp},
\mathbf{k}\rangle}{|\mathbf{l}|^{2}|\mathbf{k}-\mathbf{l}|^{2}} +
q_{\mathbf{l}}q_{\mathbf{k}-\mathbf{l}}\frac{\langle \mathbf{l},
\mathbf{k}-\mathbf{l} \rangle \langle \mathbf{k}-\mathbf{l},
\mathbf{k}\rangle }{|\mathbf{l}|^{2}|\mathbf{k} -\mathbf{l}|^{2}}.
\end{eqnarray*} After considering the effect of the mapping $(\mathbf{l},
\mathbf{k}-\mathbf{l})\mapsto (\mathbf{k}-\mathbf{l}, \mathbf{l})$ on each of the terms above, we may write
\begin{eqnarray*} F_{\mathbf{k}}^{\perp}(w,q)&=&\sum_{\mathbf{l}, \, \mathbf{k}-\mathbf{l}\in
H_{N}} w_{\mathbf{l}}w_{\mathbf{k}-\mathbf{l}}\frac{\langle
\mathbf{l}^{\perp}, \mathbf{k} \rangle}{2}\bigg(
\frac{1}{|\mathbf{l}|^{2}}-\frac{1}{|\mathbf{k}-\mathbf{l}|^{2}}\bigg)+
w_{\mathbf{l}}q_{\mathbf{k}-\mathbf{l}} \frac{\langle
\mathbf{k}-\mathbf{l}, \mathbf{k}\rangle
}{|\mathbf{k}-\mathbf{l}|^{2}}\\ F_{\mathbf{k}}^{\parallel}(w, q)
&=&\sum_{\mathbf{l}, \, \mathbf{k}-\mathbf{l}\in
H_{N}}-w_{\mathbf{l}}w_{\mathbf{k}-\mathbf{l}}\frac{\langle
\mathbf{l}^{\perp}, \mathbf{k}
\rangle^{2}}{|\mathbf{l}|^{2}|\mathbf{k}-\mathbf{l}|^{2}}+
w_{\mathbf{l}}q_{\mathbf{k}-\mathbf{l}}\frac{\langle
\mathbf{l}^{\perp}, \mathbf{k} \rangle \langle \mathbf{k}-\mathbf{l},
\mathbf{k} +\mathbf{l}\rangle}{|\mathbf{l}|^{2}|\mathbf{k}
-\mathbf{l}|^{2}}\\ &\, & \, \,+ \sum_{\mathbf{l}, \, \mathbf{k}-\mathbf{l}\in
H_{N}}q_{\mathbf{l}}q_{\mathbf{k}-\mathbf{l}} \frac{\langle\mathbf{l},
\mathbf{k} -\mathbf{l}
\rangle}{2}\frac{|\mathbf{k}|^{2}}{|\mathbf{l}|^{2}|\mathbf{k}-\mathbf{l}|^{2}}.  
\end{eqnarray*}   
The assertion made in the previous remark now follows easily from these expressions since if $\sigma_\mathbf{k}=\sigma_\mathbf{k}'=0$ for all $\mathbf{k} \in H_N$ and $w_0=0$, then $w_t=(w_\mathbf{k}(t))_{\mathbf{k}\in H_N}\equiv0$ for all times $t$.

To prove Theorem \ref{thm:burg}, we do as in the previous two examples and start by computing $\mcC$ corresponding to \eqref{eqn:burgcsde}.  Define
\begin{equation*} \mcG= \bigg\{ X_0 + \sum_{\mathbf{k} \in
FD_{I}} u_{\mathbf{k}} X_{\mathbf{k}} + v_{\mathbf{k}}Y_{\mathbf{k}}\,
: \, u_{\mathbf{k}}, v_{\mathbf{k}}\in \RR \bigg\}
\end{equation*} where
\begin{eqnarray*} X_{0} &=& \sum_{\mathbf{k}\in H_{N}}\big[-\nu
|\mathbf{k}|^{2} w_{\mathbf{k}} + i F^{\perp}_{\mathbf{k}}(w, q)
\big]\frac{\partial}{\partial w_{\mathbf{k}}}+ \big[-\nu
|\mathbf{k}|^{2} q_{\mathbf{k}} + i F^{\parallel}_{\mathbf{k}}(w,
q) \big]\frac{\partial}{\partial q_{\mathbf{k}}}\\ &\, & \, \, +
\sum_{\mathbf{k}\in G_{N}}\big[-\nu
|\mathbf{k}|^{2}\bar{w}_{\mathbf{k}} - i F^{\perp}_{\mathbf{k}}(\bar{w}, \bar{q}) \big]\frac{\partial}{\partial
\bar{w}_{\mathbf{k}}}+ \big[-\nu |\mathbf{k}|^{2}\bar{ q}_{\mathbf{k}}
- i F^{\parallel}_{\mathbf{k}}(\bar{w},\bar{ q})
\big]\frac{\partial}{\partial \bar{q}_{\mathbf{k}}}
\end{eqnarray*} and
\begin{equation*} X_{\mathbf{k}}= \frac{\partial}{\partial
w_{\mathbf{k}}}+ \frac{\partial}{\partial \bar{w}_{\mathbf{k}}},
\qquad \qquad Y_{\mathbf{k}}= i \frac{\partial}{\partial
w_{\mathbf{k}}} - i \frac{\partial}{\partial \bar{w}_{\mathbf{k}}}.
\end{equation*} 
Notice that $n(X_{0}, X_{\mathbf{j}})=1$ for all $\mathbf{j} \in \{\mathbf{k} \in H_N \, : \, \sigma_\mathbf{k}\neq 0, \, \sigma_{\mathbf{k}}' \neq 0 \}$ since there are no diagonal terms in the nonlinear part of $X_0$. In particular, $$[X_{\mathbf{j}}, X_{0}] \in \mcG_{1}^{ \text{o}}\,\text{ for all }\,\mathbf{j} \in\{\mathbf{k} \in H_N \, : \, \sigma_\mathbf{k}\neq 0, \, \sigma_{\mathbf{k}}' \neq 0 \}. $$  Moreover, one can compute these commutators to see that 
\begin{eqnarray*} [X_{\mathbf{j}},X_0]&=&-\nu
  |\mathbf{j}|^{2}\frac{\partial}{\partial w_{\mathbf{j}}} -\nu
  |\mathbf{j}|^{2}\frac{\partial}{\partial \bar{w}_{\mathbf{j}}} \\
  &&\, \, \, + \, i \sum_{\mathbf{k} \in H_{N}}
  \bigg[w_{\mathbf{k}-\mathbf{j}}\langle \mathbf{j}^{\perp},
  \mathbf{k} \rangle\bigg(
  \frac{1}{|\mathbf{j}|^{2}}-\frac{1}{|\mathbf{k}-\mathbf{j}|^{2}}\bigg)+\,
  q_{\mathbf{k}-\mathbf{j}}\frac{\langle \mathbf{k}- \mathbf{j},
    \mathbf{k}\rangle}{|\mathbf{k}-\mathbf{j}|^{2}}\bigg]
  \frac{\partial}{\partial w_{\mathbf{k}}}\\ &&\, \, \, -\, i
  \sum_{\mathbf{k} \in H_{N}}
  \bigg[\bar{w}_{\mathbf{k}-\mathbf{j}}\langle \mathbf{j}^{\perp},
  \mathbf{k} \rangle\bigg(
  \frac{1}{|\mathbf{j}|^{2}}-\frac{1}{|\mathbf{k}-\mathbf{j}|^{2}}\bigg)+\bar{q}_{\mathbf{k}-\mathbf{j}}\frac{\langle
    \mathbf{k}- \mathbf{j},
    \mathbf{k}\rangle}{|\mathbf{k}-\mathbf{j}|^{2}}\bigg]
  \frac{\partial}{\partial \bar{w}_{\mathbf{k}}}\\ &&\, \, \, + \, i
  \sum_{\mathbf{k} \in H_{N}}\bigg[ -2w_{\mathbf{k}-\mathbf{j}}
  \frac{\langle \mathbf{j}^{\perp},
    \mathbf{k}\rangle^{2}}{|\mathbf{j}|^{2}|\mathbf{k}-\mathbf{j}|^{2}}+q_{\mathbf{k}-\mathbf{j}}\frac{\langle
    \mathbf{j}^{\perp}, \mathbf{k}\rangle \langle \mathbf{k}-
    \mathbf{j}, \mathbf{k} +
    \mathbf{j}\rangle}{|\mathbf{j}|^{2}|\mathbf{k}-\mathbf{j}|^{2}}\bigg]\frac{\partial}{\partial
    q_{\mathbf{k}}}\\ &&\, \, \, - \, i \sum_{\mathbf{k} \in
    H_{N}}\bigg[ -2\bar{w}_{\mathbf{k}-\mathbf{j}} \frac{\langle
    \mathbf{j}^{\perp},
    \mathbf{k}\rangle^{2}}{|\mathbf{j}|^{2}|\mathbf{k}-\mathbf{j}|^{2}}+\bar{q}_{\mathbf{k}-\mathbf{j}}\frac{\langle
    \mathbf{j}^{\perp}, \mathbf{k}\rangle \langle \mathbf{k}-
    \mathbf{j}, \mathbf{k} +
    \mathbf{j}\rangle}{|\mathbf{j}|^{2}|\mathbf{k}-\mathbf{j}|^{2}}\bigg]\frac{\partial}{\partial
    \bar{q}_{\mathbf{k}}}.
\end{eqnarray*}  
Note also that for all $\mathbf{j}, \mathbf{m} \in
\{\mathbf{k} \in H_N \, : \, \sigma_\mathbf{k}\neq 0, \, \sigma_{\mathbf{k}}' \neq 0 \}$ such that $\mathbf{j} + \mathbf{m} \in H_{N}$
\begin{equation*} n(X_{\mathbf{m}}, [X_{\mathbf{j}}, X_0])=
n(Y_{\mathbf{m}}, [X_{\mathbf{j}}, X_0] )=1.
\end{equation*} Hence for all $\mathbf{j}, \mathbf{m} \in \{\mathbf{k} \in H_N \, : \, \sigma_\mathbf{k}\neq 0, \, \sigma_{\mathbf{k}}' \neq 0 \}$ with
$\mathbf{j}+ \mathbf{m} \in H_{N}$, $[X_{\mathbf{m}}, [X_{\mathbf{j}},
X_0]]\in \mcG_{2}^{ \text{o}}$ and $
[Y_{\mathbf{m}},[X_{\mathbf{j}}, X_0]]\in \mcG_{2}^{ \text{o}}$.
Computing these commutators we find that  
\begin{equation}\label{blie1} [X_{\mathbf{m}}, [X_{\mathbf{j}},
X_0]]= \langle \mathbf{j}^{\perp}, \mathbf{m}\rangle\bigg(
\frac{1}{|\mathbf{j}|^{2}}-
\frac{1}{|\mathbf{m}|^{2}}\bigg)Y_{\mathbf{j} + \mathbf{m}}-
2\frac{\langle\mathbf{j}^{\perp}, \mathbf{m}
\rangle^{2}}{|\mathbf{j}|^{2}|\mathbf{m}|^{2}}
\widetilde{Y}_{\mathbf{j} + \mathbf{m}}
\end{equation} and
\begin{equation}\label{blie2} [Y_{\mathbf{m}}, [X_{\mathbf{j}},
X_0]]=- \langle \mathbf{j}^{\perp}, \mathbf{m}\rangle\bigg(
\frac{1}{|\mathbf{j}|^{2}}-
\frac{1}{|\mathbf{m}|^{2}}\bigg)X_{\mathbf{j} + \mathbf{m}}+
2\frac{\langle\mathbf{j}^{\perp}, \mathbf{m}
\rangle^{2}}{|\mathbf{j}|^{2}|\mathbf{m}|^{2}}
\widetilde{X}_{\mathbf{j} + \mathbf{m}}
\end{equation} where
\begin{equation*} \widetilde{X}_{\cdot}= \frac{\partial}{\partial
q_{\cdot}}+ \frac{\partial}{\partial \bar{q}_{\cdot}},\qquad
\widetilde{Y}_{\cdot}=i \frac{\partial}{\partial q_{\cdot}}- i
\frac{\partial}{\partial \bar{q}_{\cdot}}.
\end{equation*}
We will now use the above computations to prove that 
\begin{align*}
\big\{ X_\mathbf{j}, Y_\mathbf{j}, \widetilde{X}_\mathbf{j}, \widetilde{Y}_\mathbf{j} \, : \, \| \mathbf{k}\|_\infty \leq k\}\subset \mcC^\text{o} 
\end{align*}
for all $k=1,2, \ldots, N$ by induction on $k$.  It will then follow that $\mcC^\text{o}$ spans the tangent space, and so we may pick $\mcD(w,q)=\CC^{8N(N+1)}$ for all $(w, q) \in \CC^{8N(N+1)}$.

To prove the claim when $k=1$, first substitute 
\begin{align*}
(\mathbf{j},
\mathbf{m})=(( 1,0), (0, 1)), ((1,0), (0,-1)), ((-1,0), (0, -1)), ((-1, 0), (0, 1)) 
\end{align*}
into equations \eqref{blie1}-\eqref{blie2} to see
that $\widetilde{X}_{(1,1)}$, $\widetilde{Y}_{(1,1)}$, $
\widetilde{X}_{(1,-1)}$, $\widetilde{Y}_{(1,-1)}$,
$\widetilde{X}_{(-1,-1)}$, $\widetilde{Y}_{(-1,-1)},
\widetilde{X}_{(-1,1)}$, $\widetilde{Y}_{(-1,1)}\in \mcC^{\text{o}}$.
Substituting 
\begin{align*}
(\mathbf{j}, \mathbf{m})=(( 1,1), (0, - 1)), ((1,1),
(-1,0)), ((-1,1), (0, -1)), ((-1, -1), (1, 0)) 
\end{align*}
into the same equations and using the fact that $ X_{\mathbf{k}},  Y_{\mathbf{k}}\in
\mcC^\text{o}$ for any $\| \mathbf{k}\|_{\infty}=1$, we find by taking linear combinations that $\widetilde{X}_{(1,0)}$,
$\widetilde{Y}_{(1,0)}$, $ \widetilde{X}_{(0,1)}$,
$\widetilde{Y}_{(0,1)}$, $\widetilde{X}_{(-1,0)}$,
$\widetilde{Y}_{(-1,0)}, \widetilde{X}_{(0,-1)}$,
$\widetilde{Y}_{(0,-1)}\in \mcC^{\text{o}}$.  This proves the initial statement in the inductive argument.  Suppose now that for some $1\leq k< N$ 
\begin{equation*} \big\{X_{\mathbf{j}}, Y_{\mathbf{j}},
\widetilde{X}_{\mathbf{j}},
\widetilde{Y}_{\mathbf{j}}\, : \, \mathbf{j} \in H_{N},
\|\mathbf{j}\|_{\infty}\leq k \big\}  \subset \mcC^\text{o}.  
\end{equation*} 
Note that if $\mathbf{m}, \mathbf{j}\in
H_{N}$ are such that $\|\mathbf{m}\|_{\infty}\leq k$, $\|\mathbf{j}
\|_{\infty}=1$, then
$[\widetilde{X}_{\mathbf{m}}, [X_{\mathbf{j}}, X_{0}]]\in \mcC^\text{odd} $ and $[\widetilde{Y}_{\mathbf{m}}, [X_{\mathbf{j}}, X_0
]]\in \mcC^\text{o}$.  Note moreover that
\begin{equation}\label{blie3} [\widetilde{X}_{m}, [X_{\mathbf{j}},
X_{0}]]= \frac{\langle \mathbf{m}, \mathbf{j} + \mathbf{m}\rangle
}{|\mathbf{m}|^{2}}Y_{\mathbf{j} + \mathbf{m}}+ \frac{\langle
\mathbf{j}^{\perp}, \mathbf{m} \rangle \langle \mathbf{m}, \mathbf{m}
+
2\mathbf{j}\rangle}{|\mathbf{j}|^{2}|\mathbf{m}|^{2}}\widetilde{Y}_{\mathbf{j}+
\mathbf{m}}
\end{equation} and
\begin{equation}\label{blie4} [\widetilde{Y}_{m}, [X_{\mathbf{j}},
X_{0}]]=- \frac{\langle \mathbf{m}, \mathbf{j} + \mathbf{m}\rangle
}{|\mathbf{m}|^{2}}X_{\mathbf{j}+ \mathbf{m}}- \frac{\langle
\mathbf{j}^{\perp}, \mathbf{m} \rangle \langle \mathbf{m}, \mathbf{m}
+ 2\mathbf{j}\rangle}{|\mathbf{j}|^{2}|\mathbf{m}|^{2}}\widetilde{X}_{
\mathbf{j}+ \mathbf{m}}.
\end{equation} We claim that if $\mathbf{m}, \mathbf{j} \in H_{N}$ are
such that $|\mathbf{j}| \neq | \mathbf{m}|$ and $\langle
\mathbf{j}^{\perp}, \mathbf{m}\rangle \neq 0$, then the pairs
\eqref{blie1} and \eqref{blie3}, \eqref{blie2} and \eqref{blie4}, are
independent.  Indeed, if they are dependent under these assumptions,
then
\begin{equation*}
|\mathbf{j}|^{2}\langle \mathbf{m}, \mathbf{m} + \mathbf{j} \rangle
=\frac{1}{2} (|\mathbf{j}|^{2}-|\mathbf{m}|^{2})\langle \mathbf{m},
\mathbf{m} + 2\mathbf{j}\rangle
\end{equation*} which is true if and only if
\begin{equation*}
|\mathbf{j}|^{2}+|\mathbf{m}|^{2}+2 \langle \mathbf{m},
\mathbf{j}\rangle=0.
\end{equation*} Note that this equality is impossible since $|\mathbf{j}|\neq |\mathbf{m}|$.  Therefore, to finish the inductive argument, it suffices to show that 
for all $\mathbf{k}\in H_{N}$ with $\|
\mathbf{k}\|_{\infty}=k+1$, there exist $\mathbf{m}, \mathbf{j} \in
H_{N}$ such that 
\begin{itemize}
\item $\mathbf{m}+ \mathbf{j}=\mathbf{k}$;
\item $\|\mathbf{m} \|_{\infty}= k$, $\|\mathbf{j}\|_\infty=1$, $|\mathbf{m}|\neq |\mathbf{j}|$, and $\langle \mathbf{j}^\perp, \mathbf{m}\rangle \neq 0$. 
\end{itemize}
For those such $\mathbf{k}$ away from the axes and the lines $|y|=|x|$ in the $(x,y)$-plane, take $\mathbf{j} \in H_{N}$ to be the unique member of the set $\{ (1,0), (0,1), (-1,0), (0, -1)\}$ such that $\|\mathbf{k}-\mathbf{j}\|_{\infty}=k$.  Thus define $\mathbf{m}=\mathbf{k}-\mathbf{j}$ and note that $\mathbf{j}$ and $\mathbf{m}$ have different Euclidean lengths and $\langle \mathbf{j}^{\perp}, \mathbf{m}\rangle \neq 0.$ Now suppose $\mathbf{k}$ is on one of the axes or the lines $|y|=|x|$.  Then there exists $\mathbf{j} \in \{(1,0), (0,1), (-1,0), (0,-1) \}$ such that $\mathbf{m}=\mathbf{k}-\mathbf{j}$ belongs to the set of indices generated up to this point of sup norm length $k+1$.  It is easy to check that, again, $\mathbf{j}$ and $\mathbf{m}$ have different Euclidean lengths and $\langle \mathbf{j}^{\perp},\mathbf{m} \rangle\neq 0.$ This finishes the proof of the inductive argument.

Now note that we may choose a basis of $\mcC$ such that 
\begin{align*}
\mcD(w, q) = \CC^{8N(N+1)}
\end{align*}
for all $(w, q) \in \CC^{8N(N+1)}$.  Moreover, the origin is clearly an equilibrium point of $\mcG$.  Because the issue of explosion is still evident, Theorem \ref{dsaimppos} implies that for every $(w,q), (w', q') \in \CC^{8N(N+1)}$ and $T>0$, there exists $N\in \NN$ large enough such that  
\begin{align*}
p_t^n((w,q), (w',q')) >0 \, \, \text{ for all } t \geq T, \, n\geq N.  
\end{align*}
for all $t>0$.  
\end{proof}
\end{example}

\section{Proof of Main Results}
\label{sec:pres}
The goal of this section is to prove Theorem \ref{dsaimppos} and Theorem \ref{thm:posinv}.  Theorem \ref{thm:posinv} will be a relatively straightforward consequence of Theorem \ref{dsaimppos}, so we focus our attention first on proving Theorem \ref{dsaimppos}.

To prove Theorem \ref{dsaimppos}, we will use a slight modification of the condition for positivity of the density given by Ben Arous and L\'{e}andre \cite{BAL} (see also \cite{Nua98}).  The slight modification is necessary to remove the global Lipschitzian and boundedness conditions often assumed of the coefficients in the SDE.  

To setup the statement of our slight modification, let $H_\cdot= \int_0^\cdot h_s \, ds$, $h \in L^2([0,\infty): \RR^r)$, and $\Phi_\cdot^x(H)$ denote the maximally-defined solution (in time) of the equation   
\begin{equation}\label{controlp}
\Phi_s^x(H) = x+ \int_0^s X_0(\Phi_u^x(H))\, du  + \sum_{j=1}^{r} X_{j}\int_0^s h_u^{j}\, du. 
\end{equation} 
$J_{s,t}^x(H)$ denotes the maximally-defined $d\times d$ matrix-valued solution of  
\begin{eqnarray}
J_{s,t}^x(H) = \text{Id}_{d\times d} + \int_{s}^{t} DX_0 (\Phi_u^x(H)) J_{s,u}^x(H) \, du  
\end{eqnarray}
where $\text{Id}_{d\times d}$ is the identity matrix and $D$ is the
Jacobian.  Define the Gramian matrix $M_t^x(H)$ by 
\begin{equation}
(M_t^x(H))^{nk} = \sum_{m=1}^{r} \int_{0}^{t} (J_{s,t}^x(H)X_m)^{n}(J_{s,t}^x(H) X_m)^{k} \, ds.
\end{equation}  
\begin{remark}  
Sometimes $M_t^x(H)$ is called the deterministic Malliavin  covariance
matrix. Formally replacing $H$ with a Brownian motion $W$ yields the
standard (stochastic)  Malliavin  covariance matrix.
\end{remark}
\begin{lemma}
\label{lem:smbal}
Fix $x, z\in \RR^d$ and $t>0$ and suppose that $H_\cdot = \int_0^\cdot h_s\, ds$, $h\in L^{2}([0, \infty): \RR^r)$, is such that $\Phi_s^x(H)$ is defined for all times $s\in [0,t]$ and $\Phi_t^x(H)=z$.  If $M_t^x(H)$ is invertible, then 
\begin{align*}
p_t^n(x,z) >0
\end{align*}
for any integer $n\geq 1$ such that $\Phi_s^x(H) \subset B_n(0)$ for all $s\in [0,t]$.       
\end{lemma}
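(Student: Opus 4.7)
The plan is to reduce Lemma \ref{lem:smbal} to the classical Ben Arous--L\'{e}andre criterion \cite{BAL, Nua98} by a truncation argument. The modification is necessary because the polynomial drift $X_0$ is not globally Lipschitz, but the key observation is that the control trajectory $\Phi_\cdot^x(H)$, the Jacobian $J_{s,t}^x(H)$, and the deterministic Malliavin covariance $M_t^x(H)$ all depend only on the behavior of $X_0$ on $B_n(0)$, and BAL's positivity argument is essentially localized near this trajectory.

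I will first truncate the drift. By compactness of $\{\Phi_s^x(H) : s \in [0,t]\}$, there is $\delta > 0$ with $\Phi_s^x(H) \in B_{n-\delta}(0)$ for all $s \in [0,t]$. Choose a cutoff $\chi \in C_c^\infty(\RR^d;[0,1])$ with $\chi \equiv 1$ on $B_n(0)$ and $\supp \chi \subset B_{n+1}(0)$, and set $\tilde{X}_0 = \chi X_0$. The truncated SDE
\begin{equation*}
d\tilde{x}_s = \tilde{X}_0(\tilde{x}_s)\,ds + \sum_{j=1}^r X_j\, dW_s^j
\end{equation*}
has smooth, globally bounded coefficients with bounded derivatives of every order, is non-explosive, and by H\"{o}rmander's theorem admits a smooth transition density $\tilde{p}_t(x,y)$ on $(0,\infty) \times \RR^d \times \RR^d$. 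Since $\tilde{X}_0 \equiv X_0$ on $B_n(0)$ and the control trajectory stays in $B_{n-\delta}(0)$, the controlled trajectory and $M_t^x(H)$ for the truncated system coincide with those of the original. The classical BAL theorem then yields $\tilde{p}_t(x, z) > 0$.

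It remains to transfer this positivity to the stopped density $p_t^n(x, z)$. I will do this by revisiting the BAL argument rather than deducing it from the bare inequality $\tilde{p}_t(x, z) > 0$, since the relation $p_t^n \leq \tilde{p}_t$ only yields an upper bound and the bridge from $x$ to $z$ could in principle exit $B_n(0)$. The BAL proof establishes positivity at $z$ via a Girsanov shift by $H$ together with a Malliavin integration-by-parts using $M_t^x(H)^{-1}$, producing a representation whose integrand is concentrated on Wiener tubes of the form $\{\sup_{s\leq t} |\tilde{x}_s - \Phi_s^x(H)| < \rho\}$ for $\rho$ arbitrarily small. The uniform sup-norm closeness of the perturbed trajectories to $\Phi_\cdot^x(H)$ follows from Gronwall's inequality applied to the Lipschitz drift $\tilde{X}_0$ together with the additive-noise structure of the diffusion. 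Choosing $\rho < \delta$ forces all paths in the tube to remain in $B_n(0)$ throughout $[0,t]$; on this event they coincide pathwise with paths of the original process and satisfy $\tau_n > t$, so the same integration-by-parts identity now computes $p_t^n(x, z)$ and shows it is positive. The main obstacle is precisely this last verification --- that the BAL representation can be localized to such a tube without losing the positivity --- which is the content of the ``slight modification'' alluded to in the excerpt.
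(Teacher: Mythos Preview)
Your proposal is correct and follows essentially the same route as the paper: truncate the drift so that the coefficients are globally bounded with bounded derivatives, note that the control trajectory and deterministic Malliavin covariance are unchanged since $\Phi_s^x(H)\subset B_n(0)$, and then rerun the Ben Arous--L\'{e}andre argument with an indicator enforcing that the perturbed paths stay in $B_n(0)$ (equivalently $\tau_n>t$), which is possible because the Girsanov/change-of-variables step concentrates on a small Wiener tube about $\Phi_\cdot^x(H)$. The paper's proof is slightly more streamlined in that it never separately establishes $\tilde{p}_t(x,z)>0$ as an intermediate step but instead inserts $\boldsymbol{1}_{\{\|\Phi(W)\|_t\leq n\}}$ directly into the chain of inequalities from Proposition~4.2.2 of \cite{Nua98}; your two-step presentation is logically equivalent and your identification of the ``main obstacle'' matches exactly what the paper verifies.
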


We defer the proof of Lemma \ref{lem:smbal} until the Appendix, and
focus our efforts in this section on exhibiting a control $H_\cdot =
\int_0^\cdot h_s \, ds$, $h\in L^2([0, \infty): \RR^r)$, so that
$\Phi_\cdot^x(H)$ has all of the properties stated in Lemma
\ref{lem:smbal}.  The proof of the existence of such a control splits into two parts.  First, in Section \ref{sec:geomcontrol} we will use the enlargement techniques of Jurjevic and Kupka \cite{JK81, JK85,Jur97} to see which directions can be flowed along in small times by
$\Phi_s^x(H)$ over the class of controls $H$ defined above.  Second,
we will see that there are enough directions so that we can construct
a sufficiently ``twisty" control $H$, ensuring that $M_t^x(H)$ is
invertible.  The existence of an equilibrium point $y\in \RR^d$ as in
the statement of Theorem \ref{dsaimppos} allows us control over the
time parameter.

\subsection{A Primer on Geometric Control Theory}
\label{sec:geomcontrol}

For $x\in \RR^d$ and $t>0$, let $A(x, \leq t)$ be the set of points $z\in \RR^d$ such that for some time $t_0\in(0, t]$ there exists $H_\cdot =\int_0^\cdot h_s \,ds $, $h\in L^2([0, \infty): \RR^r)$, for which $\Phi_s^x(H)$ is defined for all $s\in [0, t_0]$ and $\Phi_{t_0}^x(H)=z$.  Recalling the set $\mcC$ defined in Section \ref{sec:mainresults}, here we will use the techniques \cite{JK81, JK85, Jur97} to prove the following result:
\begin{lemma}
\label{lem:dsagen}
For all $x\in \RR^d$ and all $t>0$, $\{ x\} + \mcC \subset \overline{A(x, \leq t)}$.  
\end{lemma}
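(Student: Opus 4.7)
The plan is to prove $\{x\}+\mcC\subset\overline{A(x,\leq t)}$ by induction on the depth $j$ in the tower $\mcG_j$, showing at each level that elements of $\mcG_j$ (with their appropriate sign constraints) yield approximate displacements of controlled trajectories from $x$ in arbitrarily short time. For the base case, take the piecewise constant control $h^i_s=\alpha_i/\epsilon\,\mathbf{1}_{[0,\epsilon]}(s)$; since the $X_i$ are constant, $\Phi^x_\epsilon(H)=x+\sum_i\alpha_i X_i+O(\epsilon)$ as $\epsilon\to 0$, the $O(\epsilon)$ coming from the drift $X_0$ acting over an interval of length $\epsilon$ in a bounded neighborhood of $x$. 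Both signs of each $\alpha_i$ are admissible, so $\mcG_0=\text{span}\{X_1,\ldots,X_r\}$ lies in $\overline{A(x,\leq t)}-\{x\}$ with either sign, as required for $\mcG_0\subset\mcC^{\text{o}}$.

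For the inductive step I would use the classical Jurdjevic--Kupka conjugation trick. Fix a constant $V\in\mcC^{\text{o}}$, so that by earlier steps the flow $\phi^V_\epsilon$ can be approximated by controlled trajectories for either sign of $\epsilon$; fix also $W\in\mcG_j$, already approximable by the inductive hypothesis. Since $V$ is constant, $\phi^V_\epsilon(y)=y+\epsilon V$, and the push-forward of $W$ admits the \emph{terminating} Taylor expansion
\[
(\phi^V_{-\epsilon})_*W \;=\; \sum_{k=0}^{n}\frac{\epsilon^k}{k!}\,\ad^{k}V(W),\qquad n:=n(V,W),
\]
whose top term $\ad^{n}V(W)$ is itself a constant vector field (each bracket with $V$ lowers the polynomial degree in the $V$-direction by one, and the expansion truncates exactly at $k=n$ by the definition of $n(V,W)$). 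The conjugated composition $\phi^V_\epsilon\circ\phi^W_s\circ\phi^V_{-\epsilon}$ is the time-$s$ flow of $(\phi^V_{-\epsilon})_*W$; by composing several such conjugations with carefully chosen sign patterns in $\epsilon$ and scaling $s=\lambda\,n!\,\epsilon^{-n}$, the Jurdjevic--Kupka construction isolates in the $\epsilon\to 0$ limit a net displacement exactly $\lambda\,\ad^{n}V(W)$, the intermediate-order contributions being absorbed by the scheme.

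The odd/even dichotomy then arises purely from the sign of the top-order coefficient: replacing $\epsilon\mapsto-\epsilon$ multiplies $\ad^{n}V(W)$ by $(-1)^n$. For $n$ odd, both signs of $\lambda\,\ad^{n}V(W)$ are attainable, so $\ad^{n}V(W)$ enters $\text{span}(\mcC^{\text{o}})$; for $n$ even, $\lambda\geq 0$ is forced and $\ad^{n}V(W)$ enters $\text{cone}_{\geq 0}(\mcC^{\text{e}})$. This promotes the inductive hypothesis from $\mcG_j$ to $\mcG_{j+1}$. Concatenating approximate flows adds their displacements, and since every auxiliary $V$-conjugation can be made arbitrarily fast, the total elapsed time can be kept below any prescribed $t>0$; closure under the relevant sign-weighted sums is then automatic, and taking the union over $j$ yields $\{x\}+\mcC\subset\overline{A(x,\leq t)}$.

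The principal obstacle will be making the formal Taylor and commutator manipulations into honest flow approximations, since the inductive hypothesis supplies controlled trajectories whose endpoints approximate the desired displacements rather than literal flows of $W\in\mcG_j$; one must iterate small-displacement approximants to realize the requisite short-time flows, and quantify composition errors uniformly as the parameters shrink. This is handled by confining the whole construction to a fixed compact neighborhood of $x$, where the polynomial vector fields are uniformly Lipschitz, and applying Gronwall-type estimates to the various nominal-versus-approximate flow differences. A further delicate point is that $X_0$ cannot be integrated backward in time, so the conjugating fields $V$ must always be drawn from $\mcC^{\text{o}}$ rather than involve $X_0$ directly; the partition of brackets by the parity of $n(V,W)$ is designed precisely to track which displacements inherit a free sign from this asymmetry and which do not.
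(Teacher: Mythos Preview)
Your overall architecture---induction on the level $j$, conjugation by a constant $V\in\mcC^{\text{o}}$ to extract the top bracket $\ad^{n(V,W)}V(W)$, and the odd/even split coming from the parity of $n(V,W)$---matches the paper's proof in spirit. But two concrete points in your inductive step are wrong, and they are exactly where the work lies.

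First, the limit direction is inverted. With $s=\lambda\,n!\,\epsilon^{-n}$ and $\epsilon\to 0$ you send the flow time $s\to\infty$, which destroys the ``arbitrarily small total time'' requirement; moreover $(\phi^V_{-\epsilon})_*W\to W$ as $\epsilon\to 0$, so nothing is isolated. The Jurdjevic--Kupka mechanism takes $\epsilon\to\infty$: one makes a \emph{large} displacement along the constant field $V$ (which costs arbitrarily little time since $V$ can be flowed at any speed), so that the top-degree part of $W(\,\cdot\,+\epsilon v)$ dominates, and then rescales time by $\epsilon^{-n}$. In the paper this is phrased via the saturate: $(\psi_{\alpha\lambda})_*W\in\sat(\mcG)$ by the normalizer lemma, and then
\[
\lim_{\lambda\to\infty}\frac{1}{\lambda^{n}}(\psi_{\alpha\lambda})_*W
=\frac{\alpha^{n}}{n!}\,\ad^{n}V(W)\in\sat(\mcG)
\]
by closure of the saturate under positive scaling and uniform limits. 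No ``several conjugations with carefully chosen sign patterns'' are needed to kill the intermediate terms; a single conjugation with large $\lambda$ and the rescaling does it automatically, since the $k$th term carries a factor $\lambda^{k-n}\to 0$ for $k<n$.

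Second, your inductive hypothesis is stated at the level of \emph{displacements}, but in the recursion $W$ ranges over $\mcG_j$, which contains genuinely nonconstant polynomial fields (e.g.\ $[X_j,X_0]$). For these you need access to their \emph{flow} for short positive times, not merely an endpoint approximation, because the conjugation $\phi^V_\epsilon\circ\phi^W_s\circ\phi^V_{-\epsilon}$ requires an honest $\phi^W_s$. The paper sidesteps this by working entirely inside $\sat(\mcG)$: once a vector field is in the saturate, its positive-time flow is available for accessibility purposes by definition. If you want to avoid the saturate formalism you must strengthen your inductive hypothesis to ``short positive-time flows of every $W\in\mcG_j$ are approximable by controlled trajectories,'' and then the Gronwall bookkeeping you allude to becomes substantial (and must survive the large-$\epsilon$ excursion). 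The paper's route via Lemma~\ref{conelem} and Lemma~\ref{normlem} is considerably cleaner.
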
  
We start by making some heuristic observations, arguing intuitively why we should expect Lemma \ref{lem:dsagen} to be true.  To make notation more legible, for any $C^\infty$ vector field $V$ on $\RR^d$ let $\exp(tV)(x)$ denote the maximally-defined integral curve of $V$ passing through $x$ at $t=0$.

We first see why we should expect the following containment to hold 
\begin{align}
\label{eqn:cont1}
\{x\} +\text{span}\{X_1, \ldots, X_r\} \subset \overline{A(x, \leq t)}
\end{align}
for all $x\in \RR^d$, $t>0$.  Let $x\in \RR^d$, $\alpha \in \RR\setminus \{0\}$ and $j\in \{ 1, \ldots, r\}$ be given. The key is to realize that for $\lambda >0$ large and $t>0$ small 
\begin{align*}
\exp(t(X_0 + \alpha \lambda X_j))(x) \approx \exp(t \alpha \lambda X_j)(x)
\end{align*}
This is because the behavior of the flow along $X_0 + \alpha \lambda X_j$ is initially dominated for small times by the flow along $\alpha \lambda X_j$ since $\lambda$ is large. More precisely, taking $t= t'/\lambda$ for some $t'>0$ fixed, one can show that as $\lambda \rightarrow \infty$
\begin{align*}
\exp(t(X_0 + \alpha \lambda X_j))(x) = \exp\Big(\frac{t'}{\lambda}(X_0 + \alpha \lambda X_j)\Big)(x) \rightarrow \exp(t' \alpha X_j)(x).
\end{align*}
Since $x\in \RR^d$, $\alpha \in \RR\setminus \{0 \}$ and $j\in \{1,2, \ldots, r\}$ were assumed to be arbitrary, we now see why one should believe the containment \eqref{eqn:cont1} as one could repeat the same argument with $\alpha X_j$ replaced by an arbitrary linear combination of $X_1, \ldots, X_r$.

To see how some of the commutators in the definition of $\mcC$ arise, we start by ``tweaking" the directions $X_1, \ldots, X_r$ obtained in the previous step by $X_0$; that is, we will first flow along $X_j$ for $\alpha \lambda$ units of times and then flow along $X_0$ for $t>0$ units of time.   Again let $x\in \RR^d$, $\alpha \in \RR\setminus\{0\}$ and $j \in \{ 1, \ldots, r\}$ be given.  If $x_j\in \RR^d$ is the constant value of $X_j$, we notice that for $t>0$ small
\begin{align}
\label{eqn:flowout}
\exp( t X_0) \circ \exp( \alpha \lambda X_j)(x) &= \exp(t X_0)( x + \alpha \lambda x_j)\\
\nonumber &= x+ \alpha \lambda x_j + \int_0^t X_0 ( x + \alpha \lambda x_j + \mathcal{O}(s))\, ds.   
\end{align}      
Letting $t = t'/\lambda^{n(X_j, X_0)}$, it follows that  as $\lambda \rightarrow \infty$
\begin{align}
\label{eqn:bakercamphaus}
 \int_0^t X_0 (x + \alpha \lambda x_j + \mathcal{O}(s))\, ds\rightarrow \frac{\alpha^{n(X_j, X_0)}}{n(X_j, X_0)!} \text{ad}^{n(X_j, X_0)}X_j(X_0)(x).  
\end{align}
As much as we would like to obtain this potentially new direction by taking $\lambda \rightarrow \infty$ in \eqref{eqn:flowout}, we cannot as $\alpha \lambda x_j$ blows up as $\lambda \rightarrow \infty$.  To rid ourselves of this problem, we need to flow backwards along $X_j$ for $\alpha \lambda$ units of time producing the relation 
\begin{align*}
&\exp(-\alpha \lambda X_j) \circ\exp( t X_0) \circ \exp( \alpha \lambda X_j)(x)  \\
&= x+  \int_0^t X_0 ( x + \alpha \lambda x_j + \mathcal{O}(s))\, ds .
\end{align*}   
Using the same scaling of time $t=t'/\lambda^{n(X_j, X_0)}$,  we now see how the commutator on the righthand side of \eqref{eqn:bakercamphaus}, hence in the definition of $\mcG_{1}^{\text{e}}$ and $\mcG_{1}^{\text{o}}$, arises.  

\begin{remark}
\label{rem:split}
Note that this computation explains why the separation of $\mcC$ into $\mcC^\text{o}$ and $\mcC^\text{e}$ is needed.  If $n(X_j, X_0)$ is even and $\text{ad}^{n(X_j, X_0)}X_j(X_0)$ is constant, then relation \eqref{eqn:bakercamphaus} implies that we may only flow along $\text{ad}^{n(X_j, X_0)}X_j(X_0)$ for positive times.  Additionally, in the subsequent iteration of this method we cannot necessarily flow backwards along this vector field producing yet another direction.   
\end{remark}

\begin{remark}\label{nonExample}
Following these observations, it is evident where and why Theorem
\ref{dsaimppos} will fail to either produce optimal results or be
applicable at all.  The failure is precisely due to the fact that the
set $\mcC$ only includes those constant vector fields which can be
flowed along in small positive times.  In particular, Theorem
\ref{rem:notoptimal} does not account for cases where there is an
unavoidable time delay needed to access certain points in space (as in
the example highlighted in Remark \ref{rem:notoptimal}), usually due
the need
to employ the drift vector field $X_0$.  Moreover, Theorem
\ref{rem:notoptimal} will not even apply in situations if there is a
more serious absence of time reversibility preventing $\mcC$ from
being $d$-dimensional.  As an example, consider the following SDE on $\RR^3$
\begin{equation}
\begin{aligned}
dx_t& = -x_t y_t \,dt + dB_t \\
dy_t & = (x_t^2 -y_t z_t) \, dt \\
d z_t &=(y_t^2 - z_t )\, dt.  
\end{aligned}
\end{equation} 
For this system, it is not hard to check that H\"{o}rmander's bracket condition is satisfied globally but 
\begin{align*}
\mcC = \{ \alpha \partial_x + \lambda \partial_y \, : \, \alpha \in \RR, \, \lambda \geq 0 \}.  
\end{align*} 
Hence, Theorem \ref{dsaimppos} does not apply since $\mcC$ has dimension $2 <3$.  

Even though our general result does not apply in this example, computing $\mcC$ is still useful in that Lemma \ref{lem:dsagen} is true regardless if $\mcC$ is $d$-dimensional.  If $\mcC$ is not $d$-dimensional, one can now proceed to find more points in the set $\overline{A(x, \leq t)}$ by using $\mcC$ and the specific nature of the drift vector field $X_0$.  Then, given the existence of $H_\cdot = \int_0^\cdot h_s \,ds$, $h\in L^2([0, \infty): \RR^r)$ such that $\Phi_t^x(H)=z$, positivity of the transition density $p_t^n(x,z)$ for $n$ large enough can then be shown by following a similar line of reasoning to Lemma \ref{lem:Minv} or Remark \ref{rem:brownianpath}.    
\end{remark}

We now turn the previous heuristics into a proof of Theorem \ref{lem:dsagen}.  Our proof will employ results from the reference \cite{Jur97}, so we will first introduce some further notation and terminology to connect with the setup there.  

We recall that for any $C^\infty$ vector field $V$ on $\RR^d$,
$\exp(tV)(x)$ denotes the maximally defined integral curve of $V$
passing through $x$ at time $t=0$.  Let $\mcH$ be any set of $C^\infty$
vector fields on $\RR^d$.  For $x\in \RR^d$ and $t>0$, $A_\mcH(x, \leq
t)$ denotes the set of $z \in \RR^d$ such that there exist positive
times $t_1, \ldots, t_k$ and corresponding vector fields $V_1, \ldots,
V_k \in \mcH$ such that $t_1 + \cdots +t_k \leq t$ and
\begin{align*}
\exp(t_k V_k) \circ \exp(t_{k-1} V_{k-1}) \circ \cdots \exp(t_1 V_1)(x)=z.  
\end{align*}
Because there will be many different sets of vector fields, here we will absolutely need to emphasize the dependence of these sets on $\mcH$.  

Two sets of $C^\infty$ $\RR^d$-vector fields, $\mcH$ and $\mcI$, are called
\emph{equivalent}, denoted by $\mcH \sim \mcI$, if
$\overline{A_{\mcH}(x,
  \leq t)}= \overline{A_{\mcI}(x, \leq t)}$ for all $x\in
\RR^d$ and all $t>0$.  One can show, see \cite{Jur97}, that
if $\mcH \sim \mcI$ and $\mcH\sim \mcJ$,
then $\mcH\sim  \mcI\cup \mcJ$.  In particular, if
we define
\begin{equation*} \sat(\mcH)= \bigcup_{\mcI \sim
\mcH} \mcI,
\end{equation*} then it also follows that $\sat(\mcH)\sim \mcH$.
$\sat(\mcH)$ is called the \emph{saturate} of $\mcH$. 

\begin{remark}
It is often the case that $\sat(\mcH)$ contains more vector fields than $\mcH$ itself.  Moreover, the saturate maintains identical accessibility properties in the sense $(\sim)$ described above.  This is convenient in that it allows one to use simpler vector fields to determine accessibility properties of the original set of vector fields $\mcH$.   For example, even though the constant vector field $X_j$, $j\geq 1$, does not belong to  
\begin{equation*}
  \mcG = \{ X_0 + \textstyle{\sum}_{j=1}^r u_j X_j\, : \,
u_j \in \RR\},
\end{equation*}
we used it above to generate more directions in $\overline{A(x, \leq
  t)}$ as done in the arguments following equation
\eqref{eqn:flowout}.  Using a limiting procedure, however, one can
justify that this is indeed permissible.
\end{remark}

In the next two lemmas, we list operations which allow us to expand (up to equivalence) a set of vector fields $\mcH$.  

\begin{lemma}\label{conelem} $\mcH$ is equivalent to the closed
convex hull of the set
\begin{equation*} \{\lambda V\, : \, \lambda \in [0,1],\, V \in
\mcH \}.
\end{equation*} Here the closure is taken in the topology of uniform
convergence with all derivatives on compact subsets of
$\RR^d$.
\end{lemma}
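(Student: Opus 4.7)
The plan is to prove both inclusions $\overline{A_\mcH(x, \leq t)} \subseteq \overline{A_\mcI(x, \leq t)}$ and its converse, where I write $\mcI$ for the closed convex hull of $\{\lambda V : \lambda \in [0,1],\, V \in \mcH\}$. The forward inclusion will be immediate: $\mcH \subseteq \mcI$ (taking $\lambda = 1$), so any admissible concatenation of flows for $\mcH$ is already admissible for $\mcI$, and passing to closures gives the containment.

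For the reverse inclusion, I would decompose an arbitrary $W \in \mcI$ according to the three operations used in its construction: scaling by $\lambda \in [0,1]$, finite convex combination, and closure in the local $C^\infty$ topology, and handle each separately. The scaling step is essentially a reparametrization: since $\exp(s(\lambda V))(x) = \exp((\lambda s) V)(x)$, replacing $\lambda V$ by $V$ and $s$ by $\lambda s \leq s$ converts any flow along a scaled field into a flow along the original $V \in \mcH$ without exceeding the time budget, so the corresponding endpoint is already in $A_\mcH(x, \leq t)$.

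The main obstacle and heart of the argument is the convex-combination step. For $W = \sum_{i=1}^m c_i \lambda_i V_i$ with $c_i \geq 0$, $\sum_i c_i = 1$, $\lambda_i \in [0,1]$ and $V_i \in \mcH$, I will invoke the Trotter product formula for ODE flows: on any interval $[0,s]$ on which $\exp(sW)(x)$ is defined,
\begin{equation*}
\exp(sW)(x) = \lim_{n \to \infty} \bigl[\exp\bigl(\tfrac{s}{n} c_m \lambda_m V_m\bigr) \circ \cdots \circ \exp\bigl(\tfrac{s}{n} c_1 \lambda_1 V_1\bigr)\bigr]^n(x),
\end{equation*}
with convergence uniform on compact subsets of $\RR^d$. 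Each factor is a flow along some $V_i \in \mcH$ for the nonnegative time $\tfrac{s}{n} c_i \lambda_i$, and the total elapsed time over the $n$-fold composition is exactly $s \sum_i c_i \lambda_i \leq s \leq t$. Consequently every approximating endpoint lies in $A_\mcH(x, \leq t)$, and the limit lies in its closure. Some care is needed to confirm that the Trotter iterates remain in a compact neighborhood of the (a priori merely locally defined) reference trajectory for large $n$, but this is standard once such a neighborhood is fixed.

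Finally, for the closure step, if $W^{(n)} \to W$ in the local $C^\infty$ topology and each $W^{(n)}$ is a finite convex combination handled above, then continuous dependence of the flow on the vector field yields $\exp(sW^{(n)})(x) \to \exp(sW)(x)$ on any interval of existence of the limit flow. Applying this reduction sub-step-by-sub-step to an arbitrary admissible concatenation for $\mcI$ yields $A_\mcI(x, \leq t) \subseteq \overline{A_\mcH(x, \leq t)}$, which completes the reverse inclusion and hence the equivalence $\mcH \sim \mcI$.
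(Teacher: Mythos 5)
Your argument is correct in outline, but it takes a different route from the paper: the paper does not prove the lemma directly at all, it simply invokes Theorems 5 and 6 of Chapter 2 of Jurdjevic's book \cite{Jur97} (one theorem handling passage to the closure in the $C^\infty_{loc}$ topology, the other the convexification/rescaling), whereas you reconstruct a self-contained proof. Your three reductions --- the trivial inclusion $\mcH\subset\mcI$, the reparametrization $\exp(s\lambda V)=\exp((\lambda s)V)$ which keeps the time budget because $\lambda\in[0,1]$, the Lie--Trotter product formula to realize a flow along a convex combination $\sum_i c_i\lambda_i V_i$ as a limit of concatenations of $\mcH$-flows of total time $s\sum_i c_i\lambda_i\leq s$, and continuous dependence of the flow on the vector field for the closure step --- are exactly the ingredients behind the cited theorems, so in substance you are reproving the reference rather than finding a shortcut. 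What the citation buys the paper is that the genuinely technical points are already handled there; in your write-up these are precisely the parts you wave at as ``standard'': (i) since the fields need not be complete, you must check that the Trotter iterates stay in a fixed compact neighborhood of the reference trajectory for all large $n$ so that every factor is defined; and (ii) the ``sub-step-by-sub-step'' chaining requires an explicit argument that if $y\in\overline{A_\mcH(x,\leq t_1)}$ and $z\in\overline{A_\mcH(y,\leq t_2)}$ then $z\in\overline{A_\mcH(x,\leq t_1+t_2)}$, which uses continuity (and openness of the domain) of the composed flow maps in the initial point. What your approach buys in return is transparency: it makes visible that the lemma is nothing more than rescaling, relaxation by rapid switching, and limits of flows, which is exactly the intuition the paper appeals to in its heuristic discussion preceding the lemma. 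If you flesh out points (i) and (ii), your proof stands on its own.
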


\begin{proof} Apply Theorem 5 and Theorem 6 in Chapter 2 of
\cite{Jur97}.
\end{proof}

 To state the next lemma, let $\psi: \RR^d\rightarrow \RR^d$ be a diffeomorphism.  For any $V\in \mcH$, we may define a vector field $\psi_{*}(V)$ by 
\begin{equation*}
 \psi_{*}(V)(x)= D\psi (\psi^{-1}(x)) V(\psi^{-1}(x))
\end{equation*}
where $D\psi$ is the Jacobian of $\psi$.  A diffeomorphism $\psi: \RR^d\rightarrow \RR^d$ is called a \emph{normalizer} of $\mcH$ if $\psi(x), \psi^{-1}(x) \in \overline{A_{\mcH}(x, \leq t)}$ for all $x\in \RR^d$ and all $t>0$.  The set of normalizers of $\mcH$ is denoted by $\text{Norm}(\mcH)$.

\begin{lemma}\label{normlem}
\begin{equation*} \mcH \sim \bigcup_{\psi\in \text{Norm}(\mcH)}\{\psi_{*}(V) \, : \, V\in
\mcH \}.
\end{equation*}
\end{lemma}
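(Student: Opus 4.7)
The plan is to show that $\mcH$ is equivalent to $\mcI := \bigcup_{\psi \in \text{Norm}(\mcH)}\{\psi_{*}(V) : V \in \mcH\}$. First I would observe that the identity map belongs to $\text{Norm}(\mcH)$, since $x = \lim_{\epsilon\downarrow 0}\exp(\epsilon V)(x) \in \overline{A_\mcH(x, \leq t)}$ for any $V \in \mcH$ and $t>0$, and hence $\mcH \subset \mcI$. This gives the trivial inclusion $\overline{A_\mcH(x,\leq t)} \subset \overline{A_\mcI(x,\leq t)}$ for every $x$ and $t$, and the substance of the lemma is the reverse inclusion.

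The engine of the argument is the conjugation identity
\begin{equation*}
\exp\bigl(s\psi_{*}(V)\bigr)(y) \;=\; \psi\bigl(\exp(sV)(\psi^{-1}(y))\bigr),
\end{equation*}
valid for any diffeomorphism $\psi$, smooth vector field $V$, and $s$ in the common interval of definition. It says that a flow step along $\psi_{*}(V) \in \mcI$ can be simulated by (i) approximating the map $y \mapsto \psi^{-1}(y)$, (ii) flowing along $V \in \mcH$ for the prescribed time, and (iii) approximating the map $y \mapsto \psi(y)$. The outer maps admit approximation by $\mcH$-concatenations of arbitrarily small total duration precisely because $\psi$ is a normalizer of $\mcH$.

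Concretely, given $z \in A_\mcI(x, \leq t)$ realized by a composition $z = \exp(s_k W_k)\circ\cdots\circ\exp(s_1 W_1)(x)$ with $W_i = \psi_{i,*}(V_i)$ and $s=\sum_i s_i \leq t$, I would for each $\eta>0$ insert, before and after each factor $\exp(s_i V_i)$, a finite $\mcH$-concatenation of total time at most $\eta$ approximating the appropriate $\psi_i^{\pm 1}$-map at the current base point. Uniform continuity of the subsequent flows on compact sets propagates the $O(\eta)$ error through the rest of the composition, so the new endpoint converges to $z$ as $\eta \downarrow 0$, while the total time consumed is at most $s + 2k\eta$. When $s < t$, choosing $\eta < (t-s)/(2k)$ keeps the total time within the budget, yielding $z \in \overline{A_\mcH(x, \leq t)}$.

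The principal and essentially only obstacle is the boundary case $s=t$, where the time budget has no room for the approximation overhead. To handle it, I would first rescale each $s_i$ to $(1-\delta)s_i$ for small $\delta>0$; joint continuity of the composed flow in its time parameters forces the resulting endpoint $z_\delta$ to converge to $z$ as $\delta \downarrow 0$, while $\sum (1-\delta)s_i = (1-\delta)t < t$ now lies in the strict regime handled above. Letting $\delta \downarrow 0$ after $\eta \downarrow 0$ delivers $z \in \overline{A_\mcH(x, \leq t)}$ and completes the equivalence. The subtle point throughout is precisely this bookkeeping: normalizers supply only closure-level approximations of $\psi$ and $\psi^{-1}$, and these errors must be absorbed within the strict budget $t$; the conjugation identity together with continuity of flows on compact time intervals does the rest.
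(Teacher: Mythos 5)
Your proof is correct, and it takes a genuinely different route from the paper. The paper's argument is essentially a citation: it observes that a normalizer in the paper's sense is also a normalizer in the sense of Jurdjevic's book (via the lemma following Definition 5 of Chapter 2 of \cite{Jur97}) and then invokes Theorem 9 of that chapter together with the observation that the identity map is a normalizer. You instead give a self-contained proof from first principles. The engine of your argument is the flow-conjugation identity $\exp\bigl(s\psi_{*}(V)\bigr) = \psi \circ \exp(sV) \circ \psi^{-1}$, which lets you replace each integral-curve segment along $\psi_*(V) \in \mcI$ by an approximate $\psi^{-1}$-step, an exact $V$-flow of the same duration, and an approximate $\psi$-step; the normalizer property guarantees that the outer two steps can be carried out by $\mcH$-concatenations of arbitrarily small total time, and continuity of finitely many flows on compact sets propagates the errors. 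What each approach buys: the paper's version is shorter and leverages existing machinery (the saturate formalism in \cite{Jur97}, of which this is a small piece), while yours makes the mechanism visible — in particular it isolates the two genuine technicalities, namely that the normalizer property only supplies closure-level approximations whose errors must be chased through the composition, and that when the time budget is exactly saturated one must first shave each $s_i$ to $(1-\delta)s_i$ and use joint continuity in the time parameters before inserting the approximation overhead. Both technicalities are handled correctly, and your rescaling step is precisely what is needed to cover the boundary case $\sum s_i = t$. The one point worth making explicit, which you gesture at with the phrase ``common interval of definition,'' is that the drift $X_0$ is only polynomial, so the flows $\exp(sV)$ need not be globally defined; but since the original $\mcI$-chain is given to exist, small perturbations of the intermediate base points stay inside the domain of definition, and restricting attention to a compact neighborhood of the original chain is all that is required.
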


\begin{proof}
  Notice that by the lemma immediately after Definition 5 of Chapter 2
  of \cite{Jur97}, if $\psi$ is a normalizer of $\mcH$ using our
  definition, then it is also a normalizer using the definition given
  in \cite{Jur97}.  The result then follows after applying Theorem 9
  in Chapter 2 of \cite{Jur97} and using the fact that the identity
  map is a normalizer.
\end{proof}

\begin{remark}
  We will see in the proof of Lemma \ref{lem:dsagen} that the limiting
  procedure used in our heuristic calculations is exactly of the type
  covered by Lemma \ref{conelem}.  We will also see that the use of
  normalizers is very much in line with one's ability to flow along a constant vector field for positive or negative times (hence the
  $\psi$ and $\psi^{-1}$ in the definition of a normalizer).
\end{remark}

Using repeated applications of Lemma \ref{conelem} and Lemma
\ref{normlem}, we now prove Lemma \ref{lem:dsagen}.

\begin{proof}[Proof of Lemma \ref{lem:dsagen}]
  Let $\mcG= \{ X_0 + \textstyle{\sum}_{j=1}^r u_j X_j \, : \, u_j \in
  \RR\}$.  First note that it suffices to show that if $V\in
  \mcC^{\text{o}}$ and $W\in \mcC^{\text{e}}$, then $\alpha V,
  \lambda W \in \sat(\mcG)$ for all $\alpha \in \RR$ and all $\lambda
  \geq 0$.  The result would then follow by Lemma \ref{conelem} since
  if $V_1, V_2, \ldots, V_k \in \mcC^{\text{o}}$ and $W_1, W_2,
  \ldots, W_j\in \mcC^{\text{e}}$, then
\begin{align*}
  \sum_{l=1}^k \alpha_l V_l +\sum_{i=1}^{j} \lambda_i W_i\in \sat(\mcG)
\end{align*}     
for all $\alpha_i \in \RR$ and all $\lambda_i \geq 0$.  

We first demonstrate that $\alpha X_j \in \sat(\mcG)$ for all $\alpha
\in \RR$ and $j \in \{1, \ldots, r\}$.  Indeed, by Lemma \ref{conelem}
we have
\begin{equation*} \alpha X_{j} = \lim_{\lambda \rightarrow \infty}
\frac{1}{\lambda}(X_{0} + \alpha \lambda X_{j})\in \sat(\mcG).
\end{equation*}
By induction, it is enough to show that if $V$ is a constant vector
field with $\alpha V \in \sat(\mcG)$ for all $\alpha\in \RR$ and $W\in
\sat(\mcG)$ is a polynomial vector field, then
\begin{align*}
\frac{\alpha^{n(V, W)}}{n(V, W)!} \text{ad}^{n(V, W)}V(W) \in \sat(\mcG)
\end{align*}
for all $\alpha \in \RR$.  To prove this result, we seek to apply
Lemma \ref{normlem}.  Since $V$ is a constant vector field, let
$v=V(x)\in \RR^d$ denote its constant value.  For $\alpha \in \RR$,
define a map $\psi_\alpha : \RR^d\rightarrow \RR^d$ by
 \begin{align*}
 \psi_\alpha(x) = x - \alpha v.  
 \end{align*}      
 Note that, for each $\alpha \in \RR$, $\psi_\alpha$ is a normalizer
 for $\mcG$.  Hence, for each $\alpha \in \RR$, Lemma \ref{normlem}
 implies that $(\psi_{ \alpha})_{*}(W) \in \sat(\mcG)$.  Since
 $D\psi_\alpha$ is the identity matrix, notice that
\begin{equation*} (\psi_{\alpha})_{*}(W)(x)=W(x+ \alpha
v). 
\end{equation*} 
Applying Lemma \ref{conelem}, we thus find that for all $\alpha \in \RR$
\begin{align*}
V_{\alpha W}:=\lim_{\lambda\downarrow 0}\frac{1}{\lambda^{n(V, W)}} (\psi_{\lambda \alpha})_{*}(W) \in \sat(\mcG).  
\end{align*}    
To finish the proof, all we must see is that 
\begin{align*}
V_{\alpha W}= \frac{\alpha^{n(V,W)}}{n(V, W)!} \text{ad}^{n(V, W)}V(W).    
\end{align*}
Recalling that $v \in \RR^d$ denotes the constant value of $V$, for $x \in \RR^d$ fixed consider the function $F : \RR \rightarrow \RR^d$ defined
by $\alpha\mapsto W (x+ \alpha v)$.  By induction, for $j\geq 1$
\begin{equation*} F^{(j)}(\alpha)=
\text{ad}^{j}V(W)(x + \alpha v).
\end{equation*} where $F^{(j)}$ is the $j$th
derivative of $F$ with respect to $\alpha$.  Hence we
obtain the formula
\begin{eqnarray*} (\psi_{\alpha})_{*}W(x)=
F(\alpha) = \sum_{j=0}^{n(V,W)}
\frac{\alpha^{j}}{j!}F^{(j)}(0) = \sum_{j=0}^{n(V,W)}
\frac{\alpha^{j}}{j!}\text{ad}^{j}V(W)(x)
\end{eqnarray*} since each component of $F(\alpha)$ is a polynomial in $\alpha$ with degree $ \leq n(V,W)$.  Hence we now see that
\begin{equation*} 
V_{\alpha W}=\lim_{\lambda \rightarrow \infty}\frac{1}{\lambda^{n(V,W)}}(\psi_{\alpha \lambda})_{*}(W)
=\frac{\alpha^{n(V, W)}}{n(V, W)!}\text{ad}^{n(V,W)}  V(W),
\end{equation*}
completing the proof.  
\end{proof}

Before proceeding onto the second part of the argument, we state the following lemma which we will need later.


\begin{lemma}\label{intlielem}
  Suppose that, for some $x\in \RR^d$, the Lie algebra generated by $\mcH$ evaluated at $x$ spans the tangent space.  Then for all $t, \epsilon>0$
\begin{equation*}
  \text{\emph{interior}}(A_{\mcH}(x, \leq t+ \epsilon) ) \supset
  \text{\emph{interior}}(\overline{A_{\mcH}(x, \leq t)}).
\end{equation*}
\end{lemma}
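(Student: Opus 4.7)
The plan is to show that any $z \in \text{interior}(\overline{A_\mcH(x, \leq t)})$ admits an open neighborhood contained in $A_\mcH(x, \leq t + \epsilon)$. The strategy is to construct, via a composition of flows of elements of $\mcH$ with total time less than $\epsilon$, a smooth map that is a submersion onto a neighborhood of $z$, starting from a point already in $A_\mcH(x, \leq t)$.

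First I would carry out a Sussmann/Krener-style construction at $z$, producing $V_1, \ldots, V_k \in \mcH$ and positive times $u_1^*, \ldots, u_k^*$ with $\sum u_i^* < \epsilon$ such that the map
\[
\Psi_y(u_1, \ldots, u_k) := \exp(u_k V_k) \circ \cdots \circ \exp(u_1 V_1)(y)
\]
has Jacobian (in $u$) of rank $d$ at $u = u^*$ when $y = y_0^* := \exp(-u_1^* V_1) \circ \cdots \circ \exp(-u_k^* V_k)(z)$, so that $\Psi_{y_0^*}(u^*) = z$. The crucial feature is that $\sum u_i^*$ can be made arbitrarily small; by choosing it small enough, $y_0^*$ lies in the open neighborhood of $z$ guaranteed by the hypothesis $z \in \text{interior}(\overline{A_\mcH(x, \leq t)})$.

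Second, by smoothness of the flows, $\Psi_y$ still has full rank near $u^*$ for every $y$ in some neighborhood $U'$ of $y_0^*$, and the implicit function theorem, applied with estimates uniform in $y$, yields $\rho, \eta > 0$ with $B(\Psi_y(u^*), \eta) \subset \Psi_y(B(u^*, \rho)) \subset A_\mcH(y, \leq \epsilon)$ for every $y \in U'$. Shrinking $U'$ so that $\Psi_y(u^*) \in B(z, \eta/2)$ forces $B(z, \eta/2) \subset A_\mcH(y, \leq \epsilon)$ for every $y \in U'$. Since $U' \subset \overline{A_\mcH(x, \leq t)}$, density provides $y_0 \in U' \cap A_\mcH(x, \leq t)$, whence $B(z, \eta/2) \subset A_\mcH(y_0, \leq \epsilon) \subset A_\mcH(x, \leq t + \epsilon)$, giving $z \in \text{interior}(A_\mcH(x, \leq t + \epsilon))$.

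The main obstacle is the first step. Krener's theorem only asserts nonempty interior of $A_\mcH(z, \leq \delta)$; what is needed is the refinement that a rank-$d$ submersion of the form $\Psi_{y_0^*}$ passes through $z$ with $y_0^*$ arbitrarily close to $z$. Because $\mcH$ need not be symmetric, the Baker-Campbell-Hausdorff trick of approximating iterated brackets by short alternating flows with both signs is unavailable, and one must instead use an asymmetric inductive construction within the positive orthant of times, as in Chapter 3 of \cite{Jur97}. A secondary point is that LARC is assumed only at $x$ but used at $z$; in the polynomial setting of the paper this propagates from $x$ throughout the orbit by real-analyticity, and $z$ lies in this orbit since LARC at $x$ makes the orbit an open subset of $\RR^d$ containing the interior of $\overline{A_\mcH(x, \leq t)}$.
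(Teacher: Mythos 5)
The paper's ``proof'' of this lemma is a one-line citation to Theorem 2 of Chapter 3 of Jurdjevic \cite{Jur97}; your sketch is a reconstruction of the Krener-type argument that underlies that theorem, and the overall structure (submersion in arbitrarily small time from a point $y_0$ that density places both near $z$ and inside $A_\mcH(x,\leq t)$, then the semigroup property) is the right one and matches the cited result. Your discussion of the asymmetric Krener construction is also on point: $\mcH$ as used in the paper is $\mcG=\{X_0+\sum u_jX_j\}$, a family that is not closed under sign reversal, so the symmetric BCH shortcut is indeed unavailable and the positive-orthant inductive construction is required.

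One step in your justification has a gap. You claim that $z$ lies in the orbit $\mathcal{O}(x)$ because the orbit is open and ``contains the interior of $\overline{A_\mcH(x,\leq t)}$.'' Openness of $\mathcal{O}(x)$ gives $A_\mcH(x,\leq t)\subset\mathcal{O}(x)$ and hence $\overline{A_\mcH(x,\leq t)}\subset\overline{\mathcal{O}(x)}$, but it does \emph{not} follow that $\text{interior}(\overline{A_\mcH(x,\leq t)})\subset\mathcal{O}(x)$: an open set need not be regularly open, so $\text{interior}(\overline{\mathcal{O}(x)})$ can strictly contain $\mathcal{O}(x)$, and in principle $z$ could sit on $\partial\mathcal{O}(x)$ while still being an interior point of the closure of the reachable set. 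So the propagation of LARC to $z$ needs an additional argument even in the analytic case. That said, in the one place the paper invokes this lemma (proof of Theorem \ref{dsaimppos}), the hypothesis is that $\mcC$ is a $d$-dimensional space of constant vector fields, which forces LARC to hold at \emph{every} point of $\RR^d$; the concern you raise, and the gap it opens, is therefore moot for the application, though it would need to be closed before your sketch could stand on its own as a proof of the lemma in the generality stated.
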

\begin{proof} See Theorem 2 of Chapter 3 in \cite{Jur97}.
\end{proof} 

\subsection{Strict Positivity}

The next two lemmas will operate as an easy-to-check criterion
assuring that, for a given control $H$, $M_t^x(H)$ is invertible.
Though not necessary (see Remark \ref{rem:brownianpath}), these
results use the fact that $\mcG$ contains only polynomial vector
fields.  In particular, the special structure of zero sets of
polynomials is employed in the following lemma.

\begin{lemma}\label{lem:openset}  Suppose that $\mcC$ is $d$-dimensional and let $\mcH = \cup_{m =1}^r \{ X_m, [X_0, X_m]\}.$  Then for any non-empty open $A \subset \RR^d$ the set of points in $\RR^d$ given by 
\begin{equation}\label{foliation} \bigcup_{x\in A} \{V(x) \, : \, V \in \mcH\}
\end{equation}
is $d$-dimensional.  
\end{lemma}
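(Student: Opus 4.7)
The plan is to show the set $S := \bigcup_{x \in A}\{V(x) : V \in \mcH\}$ has linear span equal to $\RR^d$, which is the natural reading of ``$d$-dimensional'' here (compare the discussion following the definition of $\mcC$ in Section~\ref{sec:mainresults}, where $d$-dimensionality of $\mcC$ is equated with spanning the tangent space). I would argue by contradiction: assume $S$ is contained in some proper linear subspace $W \subsetneq \RR^d$, and deduce that the constant vectors in $\mcC$ must all lie in $W$, contradicting the hypothesis that $\mcC$ is $d$-dimensional.

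First I would record the direct consequences of $S \subset W$: each $X_m \in W$, and $[X_0, X_m](x) \in W$ for every $x \in A$ and every $m$. Since $[X_0, X_m]$ is a polynomial vector field and $W$ is cut out by finitely many linear equations, vanishing of those linear equations on the polynomial map $x \mapsto [X_0, X_m](x)$ over the open set $A$ propagates to all of $\RR^d$ (a polynomial vanishing on a non-empty open set vanishes identically); hence $[X_0, X_m](x) \in W$ for every $x \in \RR^d$ and every $m$.

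The main tool is the following stability property: if $Y$ is a smooth vector field with $Y(x) \in W$ for all $x \in \RR^d$ and $V$ is a constant vector field with value $v$, then $[V, Y](x) = D_v Y(x) = \lim_{t \to 0} t^{-1}\bigl(Y(x+tv) - Y(x)\bigr)$ lies in $W$, since $W$ is a closed linear subspace and both $Y(x+tv)$ and $Y(x)$ do. Using this I would prove by induction on $j$ that every element of $\mcG_j$ takes values in $W$ at every point of $\RR^d$. The base case $j = 0$ is immediate since $\mcG_0 = \text{span}\{X_1, \ldots, X_r\} \subset W$, and the new generators of $\mcG_{j+1}^{\text{o}} \cup \mcG_{j+1}^{\text{e}}$ have the form $\text{ad}^{n(V,W')} V(W')$ with $V \in \mcG_j^{\text{o}}$ constant and $W' \in \mcG_j$; applying the stability property $n(V,W')$ times gives the desired containment, and both the linear span and the conical combination used to form $\mcG_{j+1}$ preserve the property of taking values in $W$.

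Since $\mcC$ is the set of constant vector fields appearing in $\bigcup_j \mcG_j$, the induction yields $\mcC \subset W$, whence $\text{span}(\mcC) \subset W \subsetneq \RR^d$, contradicting the hypothesis that $\mcC$ is $d$-dimensional. The main obstacle I anticipate is justifying cleanly the analytic-extension step from $A$ to all of $\RR^d$: this relies on polynomiality (not just smoothness) of the drift $X_0$, which ensures every iterated bracket produced along the induction is itself polynomial, so the extension argument propagates at each step without needing further analytic hypotheses; beyond this point the induction is essentially bookkeeping matching the recursive construction of the $\mcG_j$.
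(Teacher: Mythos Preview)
Your approach is correct and matches the paper's proof: assume the span is a proper subspace $W$, use polynomiality to extend the containment $[X_0,X_m](x)\in W$ from $A$ to all of $\RR^d$, then show closure under the bracket operations used to build the $\mcG_j$, forcing $\mcC\subset W$.

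One bookkeeping point to tighten: your inductive step asserts that the new generators in $\mcG_{j+1}$ have the form $\text{ad}^{n(V,W')}V(W')$ with $W'\in\mcG_j$, but in the passage from $\mcG_0$ to $\mcG_1$ the construction uses $W'=X_0$, which is not in $\mcG_0$ and need not take values in $W$. The fix is already in your hands: you have shown $[X_m,X_0]$ takes values in $W$ for every $m$, hence so does $\text{ad}^1 V(X_0)$ for any constant $V\in\mcG_0$; now apply your stability property $n(V,X_0)-1$ further times to $\text{ad}^1 V(X_0)$ rather than to $X_0$ itself. With that adjustment the induction runs cleanly, and for $j\geq 1$ your description is accurate as written. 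The paper handles this by simply noting that vector fields of the form $\sum_i p_i V_i$ (polynomial coefficients times a basis of $W$) are closed under commutators and linear combinations, which amounts to the same observation.
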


\begin{proof} Suppose that the subspace spanned by the set in \eqref{foliation} has dimension $l\leq d$ and choose a basis $v_{1}, v_{2}, \ldots, v_{l}\in \RR^d$ for this subspace.  The goal is to show that $l=d$.  Let $V_{1}, V_{2}, \ldots, V_{l}$
be the constant vector fields with constant values $v_{1}, v_{2},
\ldots, v_{l}$, respectively.  Notice that every vector field $V$ in the span of $\mcH$ is a polynomial vector field and satisfies the following equality on the open set $A$
\begin{equation}\label{lieb} V= p_1 V_1 + p_2 V_2 + \cdots + p_l V_l
\end{equation} 
for some polynomials $p_1, \ldots, p_l$.  Since $A$ is open and $V$ is a polynomial vector field, \eqref{lieb} is valid everywhere on $\RR^d$.  
Moreover, since vector fields of the form \eqref{lieb} are closed under commutators and linear combinations, we see that 
\begin{equation*} \text{span} (\mcC) \subset \text{span}\{v_1, v_2,
\ldots, v_{l} \}
\end{equation*} 
Note that this finishes the proof since $\mcC$ is $d$-dimensional.  
\end{proof}

To setup the statement of the next result, define $K^x_t(H)\subset \RR^d$ as follows:
\begin{equation}
  K^x_t(H) = \bigcup_{m=1}^{r}\big\{X_m(\Phi_s^x(H)), [X_0, X_m](\Phi_s^x(H))\, :
  \, s \in (0,t)\big\}.  
\end{equation}    
\begin{lemma}
\label{lem:Minv}
Suppose that $K^x_t(H)$ is $d$-dimensional. Then the associated matrix
$M_t^x(H)$ is invertible.
\end{lemma}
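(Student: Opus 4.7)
The plan is to argue by contrapositive through the standard adjoint-Malliavin computation, reducing the degeneracy of $M_t^x(H)$ to a pointwise linear-algebra obstruction along the controlled trajectory.

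Suppose some $\xi\in\RR^d$ satisfies $\xi^{T} M_{t}^{x}(H)\xi=0$. From the Gramian formula,
\[
\xi^{T} M_{t}^{x}(H)\xi=\sum_{m=1}^{r}\int_{0}^{t}\langle J_{s,t}^{x}(H)^{T}\xi,\,X_{m}\rangle^{2}\,ds,
\]
and nonnegativity together with continuity of $s\mapsto J_{s,t}^{x}(H)$ forces $\langle\eta(s),X_{m}\rangle=0$ for every $s\in[0,t]$ and every $m$, where $\eta(s):=J_{s,t}^{x}(H)^{T}\xi$. Differentiating the defining integral equation for $J_{s,t}^{x}(H)$ in the backward variable $s$ yields the adjoint ODE $\eta'(s)=-DX_{0}(\Phi_{s}^{x}(H))^{T}\eta(s)$. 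Since $J_{s,t}^{x}(H)$ is invertible, $\eta$ is $C^1$ and nowhere zero whenever $\xi\neq 0$.

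Next, I differentiate $\langle\eta(s),X_{m}\rangle=0$ in $s$. Because $X_{m}$ is a constant vector field, $[X_{j},X_{m}]=0$ for every constant $X_{j}$, and the identity $[X_{0},X_{m}]=-DX_{0}\cdot X_{m}$ gives
\[
0=\frac{d}{ds}\langle\eta(s),X_{m}\rangle=\langle\eta(s),[X_{0},X_{m}](\Phi_{s}^{x}(H))\rangle.
\]
Consequently $\eta(s)$ is orthogonal to $V(s):=\textnormal{span}\{X_{m},[X_{0},X_{m}](\Phi_{s}^{x}(H)) : m=1,\ldots,r\}$ for every $s\in(0,t)$.

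The final step — and the main obstacle — is to convert this family of orthogonality conditions into the single conclusion $\xi=0$ using only the hypothesis that the union $K_{t}^{x}(H)=\bigcup_{s}V(s)$ spans $\RR^{d}$. The cleanest route is to locate some $s_{*}\in(0,t)$ at which the slice $V(s_{*})$ itself equals $\RR^{d}$; at such an $s_{*}$ one has $\eta(s_{*})\in V(s_{*})^{\perp}=\{0\}$, and invertibility of $J_{s_{*},t}^{x}(H)$ then returns $\xi=J_{s_{*},t}^{x}(H)^{-T}\cdot 0=0$, the desired contradiction. Producing $s_{*}$ is where the polynomial assumption on $X_{0}$ enters: the components of $[X_{0},X_{m}](\Phi_{s}^{x}(H))$ are polynomial in the coordinates of the smooth flow $\Phi_{s}^{x}(H)$, so the rank of the family $\{V(s)\}$ is lower semicontinuous and generically constant in $s$. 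Under the spanning hypothesis on the union, that generic rank must equal $d$, giving a dense open set of candidate $s_{*}$; any such $s_{*}$ closes the argument.
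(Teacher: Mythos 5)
Your reduction is sound up through the conclusion that $\eta(s)=J_{s,t}^x(H)^{*}\xi$ satisfies $\langle \eta(s),X_m\rangle=0$ and $\langle \eta(s),[X_0,X_m](\Phi_s^x(H))\rangle=0$ for all $s\in(0,t)$ and all $m$; that much is the same starting point as the paper. The gap is the final step. The hypothesis of the lemma is that the \emph{union} $K^x_t(H)=\bigcup_{s}V(s)$ is $d$-dimensional, and this does not imply the existence of a time $s_*$ at which a single slice $V(s_*)$ spans $\RR^d$. For instance, take $d=3$, $r=1$, $X_1=\partial_{x_1}$ and $X_0=x_1\partial_{x_2}+x_1^2\partial_{x_3}$: then $[X_0,X_1](\Phi_s^x(H))=-\big(\partial_{x_2}+2\Phi^1_s\partial_{x_3}\big)$, where $\Phi^1_s$ is the first coordinate of $\Phi_s^x(H)$, so every slice $V(s)=\mathrm{span}\{\partial_{x_1},\,\partial_{x_2}+2\Phi^1_s\partial_{x_3}\}$ is two-dimensional, while the union is three-dimensional as soon as $\Phi^1_s$ takes two distinct values (and one checks directly that $M_t^x(H)$ is then indeed invertible, so the lemma is fine but your $s_*$ does not exist). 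The ``generic rank'' justification conflates the rank of a single slice with the dimension of the span of the union; moreover $\Phi_s^x(H)$ is only absolutely continuous ($h\in L^2$), so analyticity in $s$ is not available anyway. This is not a removable technicality: in the proof of Theorem \ref{dsaimppos} the $d$-dimensionality of $K^x_t(H)$ is deliberately arranged by evaluating the brackets at several distinct points $x_{r+1},\ldots,x_{r+j}$ visited at different times, i.e.\ precisely in the regime where no single slice spans, so a proof valid only when some $V(s_*)=\RR^d$ would also be too weak for the application.

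The paper's argument never needs a full slice. From $\langle M_t^x(H)y,y\rangle=0$ one gets $\langle J_{s,t}^x(H)X_m,y\rangle=0$ for all $s$ and $m$; taking $s=t$ gives $\langle X_m,y\rangle=0$, and the spanning hypothesis is then applied to the \emph{fixed} vector $y$: there exist $m$ and $t_0\in(0,t)$ with $\langle [X_m,X_0](\Phi_{t_0}^x(H)),y\rangle\neq 0$. Expanding $J_{s,t_0}^x(H)X_m$ for $s$ near $t_0$ (the leading term is $\int_s^{t_0}[X_m,X_0](\Phi_u^x(H))\,du$ precisely because $\langle X_m,y\rangle=0$) shows $\langle J_{s,t_0}^x(H)X_m,y\rangle\neq 0$ on a small interval, which produces a strictly positive contribution to the quadratic form localized near $t_0$ (after factoring through the invertible $J_{t_0,t}^x(H)$), contradicting degeneracy. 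If you wish to keep your covector formulation, you must exploit the coupling between the adjoint ODE for $\eta$ and the orthogonality constraints at \emph{different} times, as this localized expansion does, rather than reducing to a pointwise-in-$s$ linear-algebra statement.
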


\begin{proof}
  It suffices to show that $M_{t}^x(H)$ is positive definite.  Assume,
  to the contrary, that $M_t^x(H)$ is not positive-definite and let
  $\langle \, \cdot\,, \, \cdot\, \rangle$ denote the inner product on
  $\RR^d$. Then there exists $y\in \RR^d \setminus \{ 0\}$ such that
\begin{eqnarray*} 
0=\langle M_{t}^x(H) y, y \rangle = \sum_{m=1}^{r}
\int_{0}^{t} \langle J_{s,t}^x(H) X_{m}, y \rangle^{2}\, ds.  
\end{eqnarray*}
To get a contradiction, we seek to obtain a lower bound $\langle
M_{t}^x(H) y, y \rangle $ which is positive using the equality above.
To derive such a bound, first observe that for $s\leq s_0\leq u_0 \leq
t_0 \leq t$, $J_{s_{0},t_0}^x(H)= J_{u_0, t_0}^x(H) J_{s_0, u_0}^x(H)$
and that the matrix $J_{s_0,t_0}^x(H)$ is invertible.  Using these two
facts, it is not hard to check that for $s\leq s_0 \leq t_0\leq t$
\begin{eqnarray}
\label{bwflow}\partial_{s_0} J_{s_0, t_0}^x(H) &=&- J_{s_0,t_0}^x(H) DX_0(\Phi_{s_0}^x(H))\\
\nonumber J_{t_0,t_0}^x(H)&=&\text{Id}_{d\times d}.     
\end{eqnarray}   
Letting $| \cdot |$ denote the Euclidean norm on
$\RR^d$, we then see that for all $u \in (0,t)$, $\epsilon \in (0, \min(u, t-u))$
\begin{eqnarray} 
\label{eqn:Mbound} 
\nonumber  0= \langle M_t^x(H) y, y \rangle & \geq&  \int_{0}^{t}  \ip{J_{s,t}^x(H) X_{m} , y}^{2}\, ds \\
\nonumber &\geq& \int_{u
  - \epsilon}^{u+ \epsilon}\ip{J_{s,t}^x(H) X_{m}, y }^{2}\, ds \\ 
&=&
\int_{u - 
  \epsilon}^{u+\epsilon}\ip{J_{s,u}^x(H) X_{m}, (J_{u, t}^x(H))^{*} y}^{2}\, ds
\\ \nonumber &\geq & | (J_{u,t}^x(H))^{*} y |^2 \inf_{y\, : \, \|y\| =1}
\int_{u-\epsilon}^{u+\epsilon} \ip{J_{s,u} X_m, y}^2 \, ds.  
\end{eqnarray}
Since $| J_{u,t}^* y| >0$ and the unit disk is compact in $\RR^d$, it
suffices to show that for all nonzero $y \in \RR^d$ there exists $m\in
\{1,2, \ldots, r\}$, $u \in (0, t)$, and $\epsilon \in (0, \min(u,
t-u))$ such that
\begin{equation} 
\label{eqn:festM}
\int_{u - \epsilon}^{u+ \epsilon} \ip{J_{s, u}^x(H)
X_{m}, y }^{2} \, ds>0.
\end{equation} 
Thus let $y\in \RR^d$, $y\neq 0$, be arbitrary.  By hypothesis, either
$\langle X_m, y\rangle \neq 0$ for some $m\in \{1, \ldots, r\}$ or
$\langle [X_m, X_0](\Phi_{t_0}^x(H)), y\rangle \neq 0$ for some $m \in
\{1, \ldots, r \}$, $t_0 \in (0,t)$.  Clearly, if $\langle X_{m},
y\rangle\neq 0$ for some $m\in \{1,2, \ldots, r\}$, then there is
nothing to show by continuity and \eqref{eqn:festM}.  Thus suppose
that $\langle X_{m}, y\rangle=0$ for all $m=1,2, \ldots, r$ and pick
$t_{0} \in (0, t)$, $m\in \{ 1,2, \ldots, r\}$ such that
\begin{equation*} \langle z, y \rangle =\langle [X_{m},
X_0](\Phi_{t_0}^x(H)), y \rangle \neq 0.
\end{equation*} Since $\langle X_{m}, y \rangle =0$, using the definition of $J_{s,t_0}^x(H)$ twice we see that
\begin{eqnarray*} \langle J_{s,t_0}^x(H)X_{m}, y \rangle &=&
\int_{s}^{t_0}\langle DX_{0}( \Phi_u^x(H) ) J_{s,u}^x(H) X_{m}, y\rangle \, du\\
&=& \int_{s}^{t_0} \langle[X_{m}, X_0](\Phi_u^x(H)), y \rangle \, du\\
&\, &  +
\int_{s}^{t_0}\bigg\langle DX_{0} ( \Phi_{u}^x(H)) \int_{s}^{u} DX_{0} (
\Phi_v^x(H) ) J_{s, v}^x(H)X_{m} \, dv, y\bigg\rangle\, du.  
\end{eqnarray*}  
Therefore, for $s$ sufficiently close to $t_0$, $\langle
J_{s,t_0}^x(H) X_{m}, y \rangle\neq 0$. Hence continuity then implies for any $\epsilon\in (0, t_{0})$
\begin{equation*} \int_{t_0 - \epsilon}^{t_0} \langle J_{s, t_0}^x(H)
X_{m}, y \rangle^{2}\, ds>0,
\end{equation*} 
finishing the proof.
\end{proof}

We now use the previous two results and Lemma \ref{lem:dsagen} to prove Theorem \ref{dsaimppos}.

\begin{proof}[Proof of Theorem \ref{dsaimppos}]
 We first prove Theorem \ref{dsaimppos} part (\ref{dsaimppos3}) and then show how part (\ref{dsaimppos1}) follows by a similar argument.  Therefore suppose that $y\in \RR^d$ is an equilibrium point of $\mcG$ and that $x,z\in \RR^d$ are such that $y\in \mcD(x)$ and $z\in \mcD(y)$.  By Lemma \ref{lem:smbal}, our goal is to exhibit $H_\cdot = \int_0^\cdot h_s \,ds$, $h\in L^2([0,t]: \RR^r)$, such that $\Phi_t^x(H)=z$ and $M_t^x(H)$ invertible.  To ensure that $M_t^x(H)$ is invertible, we will build $H_\cdot$ in such a way so as to ``twist" the path of $\Phi^x_\cdot (H)$ from $x$ to $z$.

We first claim that there exist countably many non-empty disjoint open subsets $U_l$, $l\geq 0$, with the property that 
\begin{align}
\label{eqn:contcon}
U_{l+1}\subset  \bigcup_{w\in U_l}\mcD(w)
\end{align} 
for all $l\geq 0$.  Suppose first that $\mcD(x)= \RR^d$.  Then it follows that $\mcD(x')=\RR^d$ for all $x' \in \RR^d$.  Thus in this case simply let $U_l$ be any partition of $\RR^d$.  If $\mcD(x) \neq \RR^d$, then since $y\in \mcD(x)$ write
\begin{align*}
y=x + \textstyle{\sum}_{j=1}^k \alpha_j y_j + \textstyle{\sum}_{j=k+1}^{d} \lambda_j y_j 
\end{align*}
for some $\alpha_j \in \RR$ and $\lambda_j>0$.  Let $\lambda = \min_{j} \lambda_{j}>0$ and define constants $\alpha_0=0$ and $\alpha_l= \sum_{k=1}^l 2^{-k}$, $l\geq 1$.  Note that for $l\geq 0$ the sets 
\begin{eqnarray*}
U_{l}
&= &x+ \text{span}\{y_1,\ldots, y_{k} \}+  \{\mu_{k+1} y_{k+1}  +  \cdots + \mu_{d} y_{d}  \, : \, \mu_{j}\in ( \alpha_l \lambda, \alpha_{l+1} \lambda) \}  
\end{eqnarray*}
are disjoint, open and satisfy \eqref{eqn:contcon}.  This finishes the proof of the claim.

By construction of the sets $U_l$, $l \geq 0$, and Lemma \ref{lem:openset}, there exist  $x_{l+r} \in U_{l}$ such that 
 \begin{equation*}
\bigcup_{m=1}^{r} \{x_1, \ldots, x_r, [X_{m}, X_0](x_{r+1}), \ldots, [X_m, X_0](x_{r+j})\}
 \end{equation*}
is $d$-dimensional.  Here, recall that $x_1, \ldots, x_r$ are the constant values of $X_1, \ldots, X_r$, respectively.  Moreover, $x_{r+1} \in \mcD(x)$, $y\in \mcD(x_{j+r})$ and
 \begin{equation*}
 x_{l+1+r} \in \mathcal{D}(x_{l+r})   
 \end{equation*} 
for all $l=1,2, \ldots, j$.

We now  show that we can build $H_\cdot$ so that the path $\Phi^x_\cdot(H)$ passes through each of these points prior to time $t>0$ and so that $\Phi_t^x(H)=z$.  Observe that Lemma
\ref{intlielem} and Lemma \ref{lem:dsagen} together imply $A(w,  \leq
s)\supset \mcD(w)$ for all $w\in \RR^d$ and all $s>0$.  Hence by
definition of $A(w, \leq s)$, there exist positive times $t_1, t_2,
\ldots, t_{j+1}$ with $\sum_{l=1}^{j+1} t_l < \frac{t}{2}$ and
corresponding $H_{l}(\cdot)= \int_0^\cdot h_{l}(s) \,ds$, $h_l \in
L^2([0, t_l]: \RR^r)$, such that $\Phi_{t_1}^x(H_1)=x_{r+1}$,
$\Phi_{t_{l+1}}^{x_{r+l}}(H_{l+1})= x_{r+l+1}$, $l=1, \ldots, j-1$,
and $\Phi_{t_{j+1}}^{x_{r+j}}(H_{j+1})=y$.  By piecing together the
$H_l$'s, this now gives us the path from $x$ to $y$.  For the rest of
the path, we may also pick a positive time $t_{j+3}< \frac{t}{2}$ and
$H_{j+3}(\cdot) = \int_0^\cdot h_{j+3}(s) \,ds$, $h_{j+3} \in
L^{2}([0, t_{j+3}]: \RR^r)$ such that $\Phi_{t_{j+3}}^{y}(H_{j+3})=z$.
Moreover, since $y$ is an equilibrium point of $\mcG$,
 letting
$t_{j+2}= t - (t_1 + \cdots + t_{j+1}+t_{j+3})>0$ there exists a
control $H_{j+2}(\cdot) = \int_0^\cdot h_{j+2}(s)\,ds$, $h_{j+2} \in
L^2 ([0, t_{j+2}]: \RR^r)$ such that $\Phi_{t_{j+2}}^y(H_{j+2})=y$.
By Lemma \ref{lem:Minv}, we now obtain the conclusion in part
(\ref{dsaimppos3}).

To prove part (\ref{dsaimppos1}), simply let $z=y$ in the first
argument and, for an arbitrary $T>0$, choose $t<T$.  Note that this now finishes the proof of Theorem \ref{dsaimppos}.
\end{proof}

\begin{remark}
  \label{rem:brownianpath}
  Without using the special structure of polynomial vector fields, one
  can prove Theorem \ref{dsaimppos} alternatively by choosing the path
  from $x$ to $y$ differently as follows.  Define 
 \begin{align*}
 D(x,y) = \begin{cases}
 \mcD(x) \setminus \mcD(y) & \text{ if } \mcD(x) \neq \RR^d\\
 \RR^d & \text{ otherwise}
 \end{cases}
 \end{align*} 
  and let $y' \in D(x,y)$ be arbitrary.  Since $D(x,y)$ is open, let $\delta >0$ be such that $B_\delta(y') \subset
  D(x,y)$.  By the support theorems \cite{SV1, SV},
  there exists $s_1 \in (0, t/4)$ such that for all $n$ large enough
\begin{align*}
\PP_x \{ s_1< \tau_n, \, x_{s_1} \in B_\delta(y')\}>0.    
\end{align*}       
Now recall that $W_s=(W^1_s,
\ldots, W^r_s)$ is an $r$-dimensional standard Wiener process defined
on the probability space $(\Omega, \mathscr{F}, \PP)$.  In this remark, we identify the set $\Omega$ with the space of continuous paths $C([0, \infty)
:\RR^r)$.  Letting $M_t^x(W(\omega))$ denote the matrix $M_t^x(H)$
when $H_s=(W^1_s(\omega), \ldots, W^r_s(\omega))$, we note that by
Malliavin's proof of H\"{o}rmander's theorem \cite{KS2, Nor86}
  \begin{align*}
    \PP_x\{s_1< \tau_n , \, x_{s_1} \in B_\delta(y'), \,
    M_{s_1}^x(W) \text{ invertible} \}=\PP_x \{ s_1<\tau_n, \, x_{s_1} \in B_\delta(y')\}>0
\end{align*}      
for all $n$ sufficiently large.  Therefore, fix 
\begin{equation*}
  \omega \in \{ s_1< \tau_n, \,  x_{s_1} \in B_\delta(y'), \,
  M_{s_1}(W(\omega)) \text{ invertible}\} 
\end{equation*}
and define $H_s= (W^1_s(\omega), \ldots, W^r_s(\omega))$ on the time
interval $[0, s_1]$.  Hence $\Phi_{s_1}^x(H)\in B_\delta(y')$.
Since $$ y\in \bigcap_{w\in B_{\delta}(y')}\mcD(w), $$ pick $\tilde{H}$ such
that for some $s_2 < \frac{t}{4}$
\begin{equation*}
  \Phi_{s_2}^{\Phi_{s_1}^x(H)}(\tilde{H})
  =y.  
\end{equation*}
We can complete our path from $y$ to $z$ in
exactly the same way as in the proof of Theorem \ref{dsaimppos}.
Invertibility of the covariance matrix for our chosen control at time
$t$ follows immediately since $M_{s_1}^x(W(\omega))$ is invertible. See Theorem 8.1 in \cite{MattinglyPardoux} for a similar argument.
\end{remark}

\begin{remark}
Yet another way to prove Theorem \ref{dsaimppos} is to use a Feynman-Kac representation of the probability density function $p_t^n(x,z)$.  Indeed fixing $n\in \NN$ and $x\in B_n(0)$, observe that the time-reversed density $q^{n}_s(x,z)=p_{t-s}^n(x,z)$ solves the following PDE
\begin{align*}
\frac{\partial q^{n}_s}{\partial s}  = - \mathcal{L}^{*}_z q_s^{n} \, \,\, \text{ on } \, \, \, [0, t) \times B_n(0)  
\end{align*}
where $\mathcal{L}^*_z$ is the formal adjoint (in the $z$ variable) of the Markov generator $\mathcal{L}$ corresponding to the diffusion $x_t$.  Now consider the process $y_t$ solving 
\begin{align*}
dy_t = - X_0(y_t) \, dt - \sum_{j=1}^r X_j \, dW_t^j 
\end{align*} 
and let $T_n = \inf\{t>0 \, : \, |y_t| \geq n \}$.  It then follows that we may write $p^n_t(x,z)$ as 
\begin{align*}
p^{n}_t(x,z) = q^n_0(x,z) = \E_z e^{\int_0^{s\wedge T_n} f(y_u) \, du} q_{s\wedge T_n} (x, y_{s\wedge T_n})
\end{align*}
for some $f\in C^\infty(\RR^d:\RR)$.  One can use now the expression above coupled with the support theorems \cite{SV1, SV} applied to the time-reversed process $y_t$ to bound $p^n_t(x,z)$ from below by a positive quantity.  
\end{remark}

We finish this section by proving Theorem \ref{thm:posinv} as a
consequence of Theorem \ref{dsaimppos} (\ref{dsaimppos1}).

\begin{proof}[Proof of Theorem \ref{thm:posinv}]
  Let $\mu$ be an invariant probability measure for the Markov process
  $x_t$ defined by \eqref{eqn:sde1}.  Again, since $\mcC$ is contained
  in the Lie algebra generated by $X_1, \ldots, X_r, [X_1, X_0],
  \ldots, [X_r, X_0]$ and $\mcC$ is $d$-dimensional, it follows by
  H\"{o}rmander's theorem \cite{Hor67} that $\mu(dx)=m(x) \, dx$ for
  some nonnegative function $m\in C^\infty(\RR^d)$.  Recall also that,
  for the same reasons, the Markov process $x_t$ defined by
  \eqref{eqn:sde1} has a probability density function $p_t(x,y)$ with
  respect to Lebesgue measure on $\RR^d$ which is smooth for $(t,x,y)
  \in (0, \infty) \times \RR^d \times \RR^d$.  Since $\mu$ is an
  invariant probability measure, we have the following relation for
  almost every $z\in \RR^d$ and $t>0$
\begin{align*}
m(z) = \int_{\RR^d} m(y) p_t (y,z) \, dy.  
\end{align*}
We now use this relation to prove the positivity assertion.  Let $x\in \supp(\mu)$.  Hence $\mu(B_\delta(x)) >0$ for all $\delta >0$.  By smoothness of the density $m$, for each $\delta>0$ there exists $x_1=x_1(\delta) \in B_{\delta}(x)$ such that $m(x_1) >0$.  Since $m$ is smooth, in particular continuous, there exists $\gamma >0$ such that $B_{\gamma}(x_1) \subset B_{\delta}(x)$ and $m(y) \geq \epsilon >0$ for all $y \in  B_{\gamma}(x_1) $.  Hence for almost every $z\in \RR^d$ we have
\begin{align*}
  m(z) \geq \int_{B_{\gamma}(x_1)} m(y) p_{t}(y,z) \, dy \geq \epsilon
  \int_{B_{\gamma}(x_1)} p_{t}(y,z) \, dy.
\end{align*} 
To bound $p_t(y,z)$ from below, there are two cases.  First suppose
that $\mcD(x)=\RR^d$.  Then by definition of $\mcD(x)$, we have that
$\mcC^{\text{o}}$ is $d$-dimensional, and hence $\mcD(y)=\RR^d$ for
all $y\in \RR^d$.  Theorem~\ref{dsaimppos}~(\ref{dsaimppos1}) implies
that for any $y\in B_\gamma(x_1), \, z\in \mcD(x)$ there exists $t>0$
such that $p_t(x,z)>0$.  Since the transition density is a continuous
function in all of its arguments, there exists an open neighborood $U$
of $(t, x,z)$ in $ (0, \infty) \times B_\gamma(x_1) \times \RR^d$ such
that $p_s(x',z')\geq c>0$ for $(s,x',z') \in U$.  In particular, for
almost every $y$ in an open ball centered at $z$
\begin{align*}
m(y) \geq \epsilon c >0.
\end{align*}
Since $m$ is continuous it follows that $m(z)\geq \epsilon c>0$.  For
the second case, suppose that $\mcD(x) \neq \RR^d$.  In particular,
this implies that $\mcC^{\text{o}}$ has dimension $l < d$ and
$x\notin \mcD(x)$. Take $z\in \mcD(x)$ and decrease $\delta >0$ so
that for every $y\in B_\delta(x)$, $z\in \mcD(y)$.  Following now in
the same way as in the previous case we finish the proof of the
result.

\end{proof}

\section*{Appendix}
Here we prove Lemma \ref{lem:smbal}.  We recall that this result is
the slight modification of the criterion for positivity of the density
given by Ben-Arous L\'{e}andre \cite{BAL} which was applied without
proof in Section \ref{sec:pres}.  Such an extension is needed in this
paper since the drift vector field $X_0$ was not assumed to be
globally Lipschitzian and its derivatives were not assumed to be
globally bounded.

The proof of Lemma \ref{lem:smbal} is almost identical to (and in some
parts simpler than) the proof of Proposition 4.2.2 of \cite{Nua98}.
The basic difference needed to remove these assumptions on $X_0$ is
that we need to compare the stopped process $x_{t\wedge \tau_n}$ with
another process $x_t^{(n)}$ such that $x_t^{(n)}$ solves an SDE whose
coefficients satisfy the required Lipschitzian and boundedness
conditions and $$x_{t\wedge \tau_n}= x_{t\wedge \tau_n}^{(n)} \,\,\text{ for all }\,\,t\geq 0.$$  This localization procedure is relatively standard but we include the details for completeness.

To do such a comparison, for any integer $n\geq 1$ let $X_0^{(n)}$ be
a $C^\infty$ vector field on $\RR^d$ satisfying
\begin{align*}
X_0^{(n)}(x)= \begin{cases}
X_0(x) & \text{ for } |x| \leq n\\
0 & \text{ for } |x|\geq n+1 
\end{cases}.
\end{align*}
For $x\in \RR^d$, $n\in \NN$, $t>0$ and $H=(H^j)\in C([0,t]: \RR^r) $ let $\Phi_t^{x,n}(H)$ denote the solution of the equation
\begin{align*}
\Phi_t^{x,n}(H)= x + \int_0^t X_0^{(n)}(\Phi_s^{x,n}(H))\, ds + \sum_{j=1}^r X_j  H^j_t .  
\end{align*}
Let $J_{s,t}^{x,n}=J_{s,t}^{x,n}(H)$ denote the $d\times d$ matrix-valued solution of the equation
\begin{align*}
J_{s,t}^{x,n} = \text{Id}_{d\times d} + \int_s^t DX_0^{(n)}( \Phi_u^{x,n}(H)) J_{s,u}^{x,n} \, du  
\end{align*}
and $M_t^{x,n}(H)$ denote the matrix
\begin{align*}
(M_t^{x,n}(H))_{lm}= \sum_{j=1}^r \int_0^t (J_{s,t}^{x,n}(H) X_j)^l (J_{s,t}^{x,n}(H) X_j)^m \, ds.   
\end{align*}

\begin{proof}[Proof of Lemma \ref{lem:smbal}]
  As in \cite{Nua98}, our goal is to use Malliavin calculus to bound $p_t^n(x,z)$ from below by a quantity which is positive if the covariance matrix
  $M_t^{x,n}(H)$ is invertible.  For brevity of notation during this
  proof, we will write the functional $\Phi_t^{x,n}( \, \cdot \,)$
  simply as $\Phi(\cdot\,)$.  Let $H_\cdot= \int_0^\cdot h_u \, du, \,
  h \in L^2([0,\infty):\RR^r)$ be as in the statement of the lemma and
  let $k_l(s)$ denote the $l$th row of the matrix $k_{lj}(s)=(J_{s,t}^{x,n}(H) X_j)_l$.  For $y\in \RR^d$, let
\begin{align*}
(T_y W)(t)= W(t) + \sum_{l=1}^d y_l \int_0^t k_l(s)\, ds  \,\,\, \text{ and } \,\,\,
g(y,W)= \Phi(T_y W)- \Phi(W) 
\end{align*} 
where $W(t)=(W^1(t), \ldots, W^r(t))$ denotes the standard $r$-dimensional Wiener process on $(\Omega, \mathscr{F}, \PP)$.  
For $\beta>1$, define cutoff functions $\mathcal{K}_\beta, \alpha_\beta \in C(\RR: [0,1])$ by 
\begin{align*}
\mathcal{K}_\beta(x)= \begin{cases}
0 & \text{ if } |x| \geq \beta\\
1 & \text{ if } |x| \leq \beta -1
\end{cases}
\,\,\, \text{ and } \,\,\, \alpha_\beta(x)= \begin{cases}
0 & \text{ if } |x| \leq \frac{1}{\beta}\\
1 & \text{ if } |x| \geq \frac{2}{\beta}
\end{cases},
\end{align*}
and set 
\begin{align*}
\mathcal{H}_\beta = \mathcal{K}_\beta(\| g(\cdot, W)\|_{C^2(B_1(0): \RR^d)}) \alpha_\beta(|\det \partial_j g^i(0)|).
\end{align*} 
Under our assumptions, one can check that (see \cite{NualartD2006},
Example 1.2.1, Theorem 2.2.2 and surrounding text) $g(\cdot, W(\omega)) \in C^\infty(\RR^d)$ for a.s. $\omega \in \Omega$.

Now let $f:\RR^d \rightarrow [0, \infty)$ be bounded, measurable and $\rho: \RR^r\rightarrow (0, \infty)$ be a measurable function satisfying $\int_{\RR^r} \rho(y) \, dy =1$.  Observe that 
 \begin{align*}
\E_x f(x_{t\wedge \tau_n}) &= \int_{\RR^r} \E_x f(x_{t\wedge \tau_n}) \rho(y) \, dy \\
&=  \int_{\RR^r} \E f(\Phi(W)) \boldsymbol{1}_{\{ \|\Phi(W)\|_{t} \leq n\}} \rho(y) \, dy  
\end{align*}
where 
\begin{align*}
\{ \| \Phi(W)\|_{t} \leq n\}= \bigg\{\omega\in \Omega\,: \,  \sup_{s\in [0,t]}|\Phi_s^{x,n}(W(\omega))| \leq n\bigg\}.
\end{align*}
Girsanov's theorem then gives
\begin{align*}
&\int_{\RR^r} \E f(\Phi(W)) \boldsymbol{1}_{\{\|\Phi(W)\|_t \leq n \}} \rho(y) \, dy\\
& = \int_{\RR^r}\E f(\Phi(T_y W)) \boldsymbol{1}_{\{ \| \Phi(T_y W)\|_{ t }\leq n\}} G(y)\rho(y) \,dy   
\end{align*}
where $G(y)>0$ is the Radon-Nikodym derivative in the Girsanov change of measure formula.  Using this equality we see that for any $c_\beta
>0$ 
\begin{align*}
\E_x f(x_{t\wedge \tau_n}) &\geq \int_{\RR^r}\E f(\Phi(T_yW))
\boldsymbol{1}_{\{ \| \Phi(T_y W)\|_{ t }\leq n\}} G(y) \rho(y) \,dy
\\ 
&\geq \E \mathcal{H}_\beta \int_{|y| \leq c_\beta} f(g(y)+ \Phi(W))
\boldsymbol{1}_{\{ \| \Phi(T_yW)\|_{ t }\leq n\}}     G(y) \rho(y) \,
dy  \\ 
&\geq \EE  \mathcal{H}_\beta \textbf{1}_{\{\sup_{|y| \leq c_\beta}
  \|\Phi(T_y W)\|_t \leq n\}} \int_{|y| \leq c_\beta} f(g(y) +
\Phi(W)) G(y) \rho(y) \, dy. 
\end{align*}
Let $A_{\beta}= \{ \sup_{|y| \leq c_\beta} \| \Phi(T_yW)\|_t \leq n
\}$.  By Lemma 4.2.1 of \cite{Nua98}, for any $\beta >1$ there exist
constants $c_\beta\in (0, \beta^{-1})$ and $\delta_\beta >0$ such that
any mapping $G: B_1(0) \rightarrow \RR^d$ with $G(0)=0$,
$\|G\|_{C^2(B_1(0))}\leq \beta$ and $|\det \partial_j g^i(0)| \geq
\frac{1}{\beta}$ is diffeomorphic from $B_{c_\beta}(0)\subset \RR^d$
into a neighborhood of $B_{\delta_\beta}(0) \subset \RR^d$.  In
particular, we find that after changing variables twice
\begin{align*}
  \E_x f(x_{t\wedge \tau_n}) &\geq  \E \mathcal{H}_\beta  1_{A_\beta} \int_{|y| \leq c_\beta} f(g(y) + \Phi(W)) G(y) \rho(y) \, dy\\
  &\geq \E \mathcal{H}_\beta 1_{A_\beta} \int_{|z| \leq \delta_\beta}
  \hspace{-.04in}f(z + \Phi(W)) G(g^{-1}(z)) \rho(g^{-1}(z))
  |\det \partial_j
  g^i(g^{-1}(z))|\, dz\\
  &= \E \mathcal{H}_\beta 1_{A_\beta} \int_{|z-\Phi(W)| \leq
    \delta_\beta} f(z) G(g^{-1}(z-\Phi(W)))
  \\&\qquad\qquad\qquad\qquad\times \rho(g^{-1}(z-\Phi(W)))
  |\det \partial_j g^i(g^{-1}(z-\Phi(W)))| \, dz.
\end{align*}
Therefore we deduce the following inequality 
\begin{align*}
p_t(x,z) \geq& \E \mathcal{H}_\beta \boldsymbol{1}_{A_\beta}
\boldsymbol{1}_{\{ |z- \Phi(W)| \leq \delta_\beta \}}
G(g^{-1}(z-\Phi(W)) \\&\qquad\qquad\qquad\qquad\times \rho(g^{-1}(z-\Phi(W))) |\det \partial_j
g^i(g^{-1}(z- \Phi(W))|. 
\end{align*}
By construction, if $\mathcal{H}_\beta \neq 0$ and $|z- \Phi(W) | \leq \delta_\beta$ then
\begin{align*} 
  G(g^{-1}(z-\Phi(W)) \rho(g^{-1}(z-\Phi(W))) |\det \partial_j
  g^i(g^{-1}(z- \Phi(W))| >0.
\end{align*}
Thus it remains to prove that $\beta >0$ can be chosen large enough so that the event  
\begin{multline*}
A_\beta \cap \Big\{|z-\Phi(W)| \leq \delta_\beta, \, |\det \partial_j
g^i(0)| \geq 2\beta^{-1}, \,\|g(\cdot, W)\|_{C^2(B_1(0))}\leq
\beta-1\Big\}    
\end{multline*}
has positive probability.  Note that this can be shown by following
exactly the same line of reasoning starting in the last paragraph of
p. 1777 of \cite{MattinglyPardoux}.

\end{proof}

\bibliography{control}{} \bibliographystyle{plain}

\def\cprime{$'$}
\begin{thebibliography}{10}

\bibitem{AKM}
Avanti Athreya, Tiffany Kolba, and Jonathan~C. Mattingly.
\newblock Propogating lyapunov functions to prove noise-induced stability.
\newblock {\em arXiv: 1111.1755}, (1):1--41, 2011.

\bibitem{BAL}
G.~Ben~Arous and R.~L{\'e}andre.
\newblock D\'ecroissance exponentielle du noyau de la chaleur sur la diagonale.
  {II}.
\newblock {\em Probab. Theory Related Fields}, 90(3):377--402, 1991.

\bibitem{BHW}
Jeremiah Birrell, David~P. Herzog, and Jan Wehr.
\newblock The transition from ergodic to explosive behavior in a family of
  stochastic differential equations.
\newblock {\em Stochastic Processes and their Applications}, 122(4):1519 --
  1539, 2012.

\bibitem{Hor67}
Lars H{\"o}rmander.
\newblock Hypoelliptic second order differential equations.
\newblock {\em Acta Math.}, 119:147--171, 1967.

\bibitem{JK81}
V.~Jurdjevic and I.~Kupka.
\newblock Control systems on semisimple {L}ie groups and their homogeneous
  spaces.
\newblock {\em Ann. Inst. Fourier (Grenoble)}, 31(4):vi, 151--179, 1981.

\bibitem{JK85}
V.~Jurdjevic and I.~Kupka.
\newblock Polynomial control systems.
\newblock {\em Math. Ann.}, 272(3):361--368, 1985.

\bibitem{Jur97}
Velimir Jurdjevic.
\newblock {\em Geometric control theory}, volume~52 of {\em Cambridge Studies
  in Advanced Mathematics}.
\newblock Cambridge University Press, Cambridge, 1997.

\bibitem{KS2}
S.~Kusuoka and D.~Stroock.
\newblock Applications of the {M}alliavin calculus. {II}.
\newblock {\em J. Fac. Sci. Univ. Tokyo Sect. IA Math.}, 32(1):1--76, 1985.

\bibitem{MSH}
J.~C. Mattingly, A.~M. Stuart, and D.~J. Higham.
\newblock Ergodicity for {SDE}s and approximations: locally {L}ipschitz vector
  fields and degenerate noise.
\newblock {\em Stochastic Process. Appl.}, 101(2):185--232, 2002.

\bibitem{MattinglyPardoux}
Jonathan~C. Mattingly and {\'E}tienne Pardoux.
\newblock Malliavin calculus for the stochastic 2{D} {N}avier-{S}tokes
  equation.
\newblock {\em Comm. Pure Appl. Math.}, 59(12):1742--1790, 2006.

\bibitem{Nor86}
James Norris.
\newblock Simplified {M}alliavin calculus.
\newblock In {\em S\'eminaire de {P}robabilit\'es, {XX}, 1984/85}, volume 1204
  of {\em Lecture Notes in Math.}, pages 101--130. Springer, Berlin, 1986.

\bibitem{Nua98}
David Nualart.
\newblock Analysis on {W}iener space and anticipating stochastic calculus.
\newblock In {\em Lectures on probability theory and statistics
  ({S}aint-{F}lour, 1995)}, volume 1690 of {\em Lecture Notes in Math.}, pages
  123--227. Springer, Berlin, 1998.

\bibitem{NualartD2006}
David Nualart.
\newblock {\em The {M}alliavin calculus and related topics}.
\newblock Probability and its Applications (New York). Springer-Verlag, Berlin,
  second edition, 2006.

\bibitem{Rom04}
Marco Romito.
\newblock Ergodicity of the finite dimensional approximation of the 3{D}
  {N}avier-{S}tokes equations forced by a degenerate noise.
\newblock {\em J. Statist. Phys.}, 114(1-2):155--177, 2004.

\bibitem{SV1}
D.~Stroock and S.~R.~S. Varadhan.
\newblock On degenerate elliptic-parabolic operators of second order and their
  associated diffusions.
\newblock {\em Comm. Pure Appl. Math.}, 25:651--713, 1972.

\bibitem{SV}
Daniel~W. Stroock and S.~R.~S. Varadhan.
\newblock On the support of diffusion processes with applications to the strong
  maximum principle.
\newblock In {\em Proceedings of the {S}ixth {B}erkeley {S}ymposium on
  {M}athematical {S}tatistics and {P}robability ({U}niv. {C}alifornia,
  {B}erkeley, {C}alif., 1970/1971), {V}ol. {III}: {P}robability theory}, pages
  333--359, Berkeley, Calif., 1972. Univ. California Press.

\end{thebibliography}

\end{document}